\title{Ordered and size-biased frequencies\\ in GEM and Gibbs models for species sampling}
\author{Jim Pitman\footnote{Statistics Department, 367 Evans Hall \# 3860, University of California, Berkeley, CA 94720-3860, U.S.A.
    \texttt{pitman@berkeley.edu}}
  \and 
 Yuri Yakubovich\footnote{Saint Petersburg State University, St.\;Petersburg State University, 7/9 Universitetskaya nab., St.\;Petersburg, 199034 Russia. \texttt{y.yakubovich@spbu.ru}}}
\newcommand{\perms}[1]{\mathcal{S}_{#1}}
\newcommand{\GEM}{\mathsf{GEM}}
\newcommand{\RAM}{\mathsf{RAM}}
\newcommand{\EPPF}{\mathsf{EPPF}}
\newcommand{\OEPPF}{\mathsf{OEPPF}}
\newcommand{\CRP}{\mathsf{CRP}}
\newcommand{\OCRP}{\mathsf{OCRP}}
\newcommand{\ISBP}{\mathsf{ISBP}}
\newcommand{\BN}{\mathbb{N}}
\newcommand{\CC}{ {\mathcal C }}
\newcommand{\BP}{\mathbb{P}}
\newcommand{\BE}{\mathbb{E}}
\newcommand{\Pbul}{P_{\bullet}}
\newcommand{\ed}{\overset{d}{{}={}}}
\newcommand{\tilPi}{\widetilde{\Pi}}
\newcommand{\tilC}{\widetilde{C}}
\newcommand{\sizem}{\mbox{(size$-\alpha$)}}
\newcommand{\giv}{\,|\,}  
\newcommand{\Nright}{N^{X\uparrow} }
\newcommand{\NYup}{N^{Y\uparrow} }
\newcommand{\Ndec}{N^{\downarrow}}
\newcommand{\Ns}{N}
\newcommand{\psize}{   \widetilde{s}    }
\newcommand{\pcomp}{\widehat{p}}
\newcommand{\tilp}{\widetilde{p}}
\newcommand{\sigmapi}{\pi}
\newcommand{\Nstar}{N^{*}}
\newcommand{\Nsum}{\nu}
\definecolor{gray}{gray}{0.4}
\newtheorem{theorem}{Theorem}
\newtheorem{lemma}[theorem]{Lemma}
\newtheorem{corollary}[theorem]{Corollary}
\theoremstyle{definition}
\newtheorem*{remark*}{Remark}
\numberwithin{equation}{section}
\numberwithin{theorem}{section}
\numberwithin{remark}{section}
\begin{document}

\maketitle

\begin{abstract}
We describe the distribution of frequencies ordered by sample values in a random sample of size $n$ from the 
two parameter $\GEM(\alpha,\theta)$ random discrete distribution on the positive integers. These frequencies 
are a $\sizem$-biased random permutation of the sample frequencies in either ranked order, or in the order of appearance of values in the
sampling process. This generalizes a well known identity in distribution due to Donnelly and Tavar\'e (1986) for $\alpha = 0$
to the case $0 \le \alpha < 1$.
This description extends to sampling from Gibbs$(\alpha)$ frequencies obtained by suitable conditioning of the
$\GEM(\alpha,\theta)$ model, and yields a value-ordered version of the Chinese Restaurant construction of
$\GEM(\alpha,\theta)$ and Gibbs$(\alpha)$ frequencies 
in the more usual size-biased order of their appearance.
The proofs are based on a general construction of a finite sample $(X_1,\dots,X_n)$ from any random frequencies in size-biased
order from the associated exchangeable random partition $\Pi_\infty$ of $\BN$ which they generate.
\end{abstract}

\tableofcontents



\newpage

\section{Introduction}

We are interested in various natural orderings of clusters of common values, in sampling from random discrete distributions.
The recent review by Crane \cite{MR3458585} \cite{MR3458591} 
presents some of the widespread applications of these models of random partitions,  in the contexts of population genetics, 
ecology, Bayesian nonparametrics, combinatorial stochastic processes, and inductive inference. 
Let $\Pbul:= (P_1, P_2, \ldots)$ denote a random discrete distribution on the positive integers, to be thought of as a model
for population frequencies of various species in a large population of individuals classified by species, or otherwise partitioned by
type in some way. We take these population frequencies to be
represented in the {\em stick-breaking form}
\cite{MR0010342}, 
\cite{sawyer1985sampling} 
\begin{equation}
\label{stickbreak}
P_1:= H_1, \qquad P_2 = (1-H_1) H_2, \qquad P_3 := (1-H_1) (1-H_2) H_3, \qquad \ldots
\end{equation}
for some sequence of random variables $H_i$ with values in $(0,1)$ such that $P_i >0$ for all $i$ and $\sum_{i=1}^\infty P_i = 1$ almost surely.
We call this model for population frequencies a {\em residual allocation model} ($\RAM$) to indicate the $H_i$ are independent, 
though not necessarily identically distributed.
Let $X_1, \ldots, X_n$ denote a {\em sample of size $n$} from population frequencies $\Pbul$,  that is the first $n$ terms of a sequence $X_1, X_2, \ldots$ which 
conditionally given $\Pbul$ is i.i.d.\ according to $\Pbul$.  
We are most interested in settings where the possible sample values $1,2, \ldots$ have a clear meaning in the context of some larger population model, such 
as the age-ranks of alleles in the infinitely-many-neutral-alleles diffusion model 
\cite{MR1066961}. 
This model involves $\Pbul$ with $\GEM(0,\theta)$ distribution, while other  models of current interest 
\cite{MR2596654} 
\cite{costantini2016wright} 
involve ranked frequencies derived from the $\GEM(\alpha,\theta)$ distribution,
in which $H_i$ has the beta$(1-\alpha, \theta + i \alpha)$ density on $(0,1)$. Here $0 \le \alpha < 1$ and $\theta>-\alpha$ are real parameters, 
and beta$(a,b)$ is the probability distribution on $(0,1)$ with probability density proportional to $u^{a-1}(1-u)^{b-1}$ at $0 < u < 1$.
See 
\cite{MR2245368},  
\cite{MR2663265} 
for further background on $\GEM$ distributions.
As discussed further in following sections, a special property of $\GEM(\alpha,\theta)$, important in many contexts,
is that the $\GEM(\alpha,\theta)$ frequencies $\Pbul$ are in a size-biased random order.
It is also known 
\cite{MR1387889}  
\cite{MR2744243} 
that
\begin{equation}
\label{gemisbp}
\mbox{ the only $\RAM$s with frequencies in size-biased order are the $\GEM(\alpha,\theta)$ models.}
\end{equation}
Combined with the fact that the large $n$ asymptotics of $\GEM(\alpha,\theta)$ samples exhibit a variety of logarithmic and power law behaviors as $(\alpha,\theta)$ 
varies 
\cite{MR2245368}  
this draws attention to the $\GEM(\alpha,\theta)$ family as a tractable and versatile family of models for use in applications.

A basic problem 
is  to describe
the distribution of the partition of $n$ determined by the {\em size-ordered} or {\em ranked sample frequencies}, 
meaning the list of sizes of clusters of equal values in a sample from a random discrete distribution $\Pbul$,
in decreasing  order of size.
As recalled in Section \ref{sec:bac}, that problem has been solved for both $\GEM(\alpha,\theta)$ and for $\RAM$s with i.i.d.\ factors, 
hand in hand with a description of the distribution of the {\em appearance-ordered}  sample frequencies, that
is the list of sizes of clusters of equal values in the order in which these values appeared in the random sampling process.
It is well known that in a sample from any random discrete distribution $\Pbul$,
the appearance-ordered sample frequencies are a size-biased permutation of the partition of $n$.
A more difficult problem is to provide a corresponding description of the 
{\em value-ordered sample frequencies} which can be obtained by ordering the sample in (weakly) increasing order and then reading counts of equal values, so that the number of times the minimal value in the sample is attained comes first, 
and the frequency of the maximal sample value comes last.  See a discussion and an example in Section \ref{sec:partitionsfromsamples} below that should clarify these notions.

For the $\GEM(0,\theta)$, with i.i.d.\ beta$(1,\theta)$ factors $H_i$, 
a remarkably simple description of the value-ordered sample frequencies was provided by
Donnelly and Tavar\'e \cite{MR827330}:  
\begin{quote}
 in sampling from $\GEM(0,\theta)$,
there is no difference in distribution between  the value-ordered frequencies and the appearance-ordered frequencies:
 they are both in a size-biased random order.
\end{quote}
For sampling from a $\RAM$ with i.i.d.\ factors, a more complicated description of the distribution of value-ordered frequencies was found in 
Gnedin and Pitman \cite[\S 11]{MR2122798}.  
But there seems to be a gap in the literature regarding the distribution of value-ordered frequencies for a $\GEM(\alpha,\theta)$ sample for $0 < \alpha < 1$.
Our main result is that this problem has 
a surprisingly simple solution, almost as simple as the Donnelly and Tavar\'e result for $\alpha = 0$.
Compared to the case $\alpha = 0$, the only difference is that the usual notion of 
size-biased permutation of a composition $(n_1, \ldots, n_k)$ of $n$ needs to be
replaced by the notion of a {\em $\sizem$-biased random permutation}, defined as follows: 
\begin{itemize}
\item for each $1 \le i \le k$ the first component is set equal to $n_i$ with probability $\frac { n_i-\alpha}{n - k \alpha}$, 
\item  given $k > 1$ and that the $i$th component was chosen to be the first, for each $j \ne i$ the second component is set equal to $n_j$ with probability $\frac{n_j-\alpha}{n - n_i - (k-1) \alpha}$ 
\end{itemize}
and so on, as discussed more carefully in Section \ref{sec:pseudointro} and also in Appendix \ref{sec:pseudo}.

\begin{theorem}
\label{thm:sbm}
For each $n \ge 1$, in a random sample of size $n$ from $\GEM(\alpha,\theta)$, the value-ordered sample frequencies are $\sizem$-biased.
\end{theorem}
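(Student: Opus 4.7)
The plan is to condition on the exchangeable partition $\Pi_n$ of $[n]$ generated by the sample, which has the Pitman--Ewens law associated with $\GEM(\alpha,\theta)$, and prove the stronger conditional statement that given $\Pi_n=\pi$ with unordered blocks $B_1,\ldots,B_k$, the value-ordered sequence of block sizes is a $\sizem$-biased random permutation of $(|B_1|,\ldots,|B_k|)$. The theorem then follows on unconditioning.

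The key computation is, for each $B\in\pi$, the conditional probability that label $1$ produces exactly the block $B$ in the sample. By the stick-breaking definition of $\GEM(\alpha,\theta)$ with $H_1\sim\dbeta(1-\alpha,\theta+\alpha)$, the set $\{i\le n:X_i=1\}$ is the block of label $1$; conditional on $H_1$ this block has a binomial size, and conditional on it equalling a specific $B\subset[n]$, the remaining coordinates $(X_i)_{i\notin B}$ are independent of $H_1$ and form a sample of size $n-|B|$ from $\GEM(\alpha,\theta+\alpha)$ on the shifted label set $\{2,3,\ldots\}$. Multiplying the Beta--Binomial factor $\BE[H_1^{|B|}(1-H_1)^{n-|B|}]=(1-\alpha)_{|B|}(\theta+\alpha)_{n-|B|}/(\theta+1)_n$ by the Pitman--Ewens EPPF for the residual partition $\pi\setminus\{B\}$ with parameter $\theta+\alpha$, and dividing by the Pitman--Ewens EPPF of $\pi$ with parameter $\theta$, a short rising-factorial cancellation yields
\[
\P(\text{label-}1\text{ block equals }B\mid\Pi_n=\pi)=\frac{|B|-\alpha}{\theta+n},
\]
with complementary probability $(\theta+k\alpha)/(\theta+n)$ that label $1$ is absent from the sample. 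These coincide with the CRP transition probabilities for an $(n+1)$-st customer.

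From this, the $\sizem$-biased structure emerges by a normalization argument that avoids an explicit infinite sum. Conditional on $\Pi_n=\pi$ and on labels $1,\ldots,v-1$ being absent from the sample, the same argument with $\theta$ replaced by $\theta+(v-1)\alpha$ gives $\P(\text{label-}v\text{ block equals }B\mid\Pi_n=\pi,\text{labels }1,\ldots,v-1\text{ absent})=(|B|-\alpha)/(\theta+(v-1)\alpha+n)$, which is proportional to $|B|-\alpha$ with a $B$-independent normalizing constant. Consequently the probability that the first value-ordered block equals $B$, given $\Pi_n=\pi$, is likewise proportional to $|B|-\alpha$; since these probabilities sum to $1$ over $B\in\pi$ and $\sum_{B\in\pi}(|B|-\alpha)=n-k\alpha$, we obtain $\P(\text{first value-ordered block}=B\mid\Pi_n=\pi)=(|B|-\alpha)/(n-k\alpha)$, which is exactly the first step of the $\sizem$-biased pick. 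Conditional on this first block being $B$, the restriction of the sample to $[n]\setminus B$ is a sample of size $n-|B|$ from a shifted GEM inducing the residual partition $\pi\setminus\{B\}$ with $k-1$ blocks, so the argument iterates by induction on $k$.

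The main obstacle will be organizing the recursion with the correct induction parameter: whenever the inspected label is absent, the sample size does not decrease, so a naive induction on $n$ fails. The induction must instead be on $k$, relying on exchangeability of the Pitman--Ewens EPPF under the shift $\theta\to\theta+\alpha$ to guarantee that the residual problem after conditioning has the same structural form. The clean observation that absorbs all the ``skip'' steps into a single normalization is that at every shift the conditional probability of committing to a block $B$ is proportional to $|B|-\alpha$, irrespective of the shifted value of $\theta$.
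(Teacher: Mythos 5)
Your argument is correct, and it is a genuinely different route from the paper's main proof. The paper proves the stronger Theorem \ref{thm:gibbs} via Lemma \ref{lmm:key}: the sample is realized as the first $n$ ``primary'' customers of a Chinese Restaurant, the tables are relabelled in the order in which they are rediscovered by the ``secondary'' customers $n+1,n+2,\dots$, and the Gibbs$(\alpha)$ prediction rule \eqref{app:sb} makes each rediscovery a $\sizem$-biased pick among the not-yet-rediscovered primary tables; the value-order is thus manifestly a $\sizem$-biased permutation of the appearance order, with no computation. Your proof instead works directly with the stick-breaking representation, conditioning on $\Pi_n=\pi$ and using the Beta--Binomial moment together with the residual property $\GEM(\alpha,\theta)\to\GEM(\alpha,\theta+\alpha)$ to get $\P(\text{label-}v\text{ block}=B\mid\cdot)=(|B|-\alpha)/(\theta+(v-1)\alpha+n)$. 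This is close in spirit to the paper's alternative inductive proof in Section 4.1, which also decomposes over the minimal value and exploits the $\theta\to\theta+m\alpha$ shift; but where that proof must evaluate the resulting series via ${}_2F_1(1,a;b;1)=(b-1)/(b-a-1)$, your normalization trick --- the conditional probability is proportional to $|B|-\alpha$ with a $B$-independent constant, so summing to one over $B\in\pi$ forces the constant to be $1/(n-k\alpha)$ --- dispenses with the explicit summation entirely, and you also obtain the sharper statement conditional on $\Pi_n$. The trade-off is scope: your argument is tied to the RAM structure and the specific beta factors of $\GEM(\alpha,\theta)$, so unlike the paper's rediscovery argument it does not yield Theorem \ref{thm:gibbs} for general Gibbs$(\alpha)$ frequencies (which in size-biased order are not residual allocation models). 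Your diagnosis that the induction must run on $k$ rather than $n$, with the absent labels absorbed into the normalization, is exactly right.
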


Because sample frequencies in appearance order are size-biased in the usual way, this theorem shows that the Donnelly-Tavar\'e identity in distribution
between value-ordered and appearance-ordered frequencies is very special to $\GEM(0,\theta)$. 
It does not extend to $\GEM(\alpha,\theta)$ for $0< \alpha < 1$ without extending the notion of size-biasing to $\sizem$-biasing.
Hence the theorem dispels the tempting but false idea that value-ordered and appearance-ordered sample frequencies might be identically distributed in any
model with value-ordered population frequencies in size-biased order.  For except in trivial cases of equality between counts,
for $0 < \alpha < 1$ a $\sizem$-biased permutation is not the same in distribution as simple size-biased permutation.

Our proof of Theorem \ref{thm:sbm} in Section \ref{sec:main} shows much more: according to Theorem \ref{thm:gibbs},
the conclusion of Theorem \ref{thm:sbm} holds also for sampling from the size-biased presentation of 
frequencies  of any {\em Gibbs$(\alpha)$ partition}. That is for $\Pbul$ derived as the limits of relative frequencies in order of appearance of any random partition $(\Pi_n)$ of positive integers 
with the conditional distribution of $\Pi_n$ given $K_n=k$ that is shared by all $\GEM(\alpha, \theta)$ partitions 
\cite[Theorem 4.6]{MR2245368}  
\cite{MR2160320}, 
described in more detail in Section~\ref{sec:gibbs} below.
This leads us to speculate that there is a converse to Theorem \ref{thm:gibbs}: if in sampling from $\Pbul$ the value-ordered clusters are $\sizem$-biased, 
then $\Pbul$ is the size-biased presentation of some Gibbs$(\alpha)$ frequencies. But we do not have any proof of this.

The case $\alpha = 0$ of Theorem \ref{thm:sbm},
due to Donnelly and Tavar\'e \cite{MR827330},  
was a culmination of earlier work  by Watterson and others
\cite{MR0475994} 
\cite{MR0475995}    
\cite{watterson1977most}  
on random sampling from models of limit populations in genetics with random frequencies
governed by $\GEM(0,\theta)$ when listed in order of {\em age-rank},
meaning the frequencies of the oldest, second-oldest, third-oldest, 
$\cdots$
alleles in the population. 
The age-ranked frequencies in these models are in size-biased random order, and 
the $i$th sample value $X_i$ in this setting is then the age rank in the large population of the allelic types of the $i$th individual to be sampled.
Thus it reasonable to study samples from more general frequencies in size-biased order thinking of the sample value as of the age-rank in an infinite idealized 
population.
A natural question in this setting, is given that the allelic composition of a sample is $(n_1, \ldots, n_k)$, what is the
probability that a particular allele with $n_i$ representatives is the oldest in the sample? According a result of
Watterson and Guess \cite[Theorem 3]{watterson1977most}, under assumptions that 
are known \cite{MR1104078}  
to imply $\GEM(0,\theta)$ frequencies by age-rank in the limit population, the allele with $n_i$ representatives is the oldest in the sample with probability $n_i/n$.
Theorem \ref{thm:gibbs} extends this result as follows:

\begin{corollary}
In sampling from  a limit population with frequencies by age-rank which are in a size-biased random order, and distributed
according either to $\GEM(\alpha,\theta)$,  or to the size-biased presentation of frequencies in a Gibbs$(\alpha)$ model,
the allele with $n_i$ representatives in the sample composition $(n_1, \ldots , n_k)$ is the oldest in the sample with probability $(n_i-\alpha)/(n - k \alpha)$.
\end{corollary}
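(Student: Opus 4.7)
My plan is to read the Corollary directly off Theorem \ref{thm:sbm} (in the $\GEM(\alpha,\theta)$ case) and Theorem \ref{thm:gibbs} (in the Gibbs$(\alpha)$ case) after translating between the language of age-ranks and that of value-ordered sample frequencies. The first step is to fix that dictionary. Since $P_j$ is interpreted as the population frequency of the allele of age-rank $j$, with smaller $j$ corresponding to an older allele, the sample $(X_1,\dots,X_n)$ drawn i.i.d.\ from $\Pbul$ records the age-ranks of the sampled individuals' alleles; therefore the oldest allele present in the sample is the one whose label is $\min_{1\le i\le n}X_i$. By the very definition of the value-ordered sample frequencies recalled in the introduction, the first entry of the value-ordered composition is precisely the number of sample points attaining this minimum, i.e.\ the multiplicity in the sample of the oldest sampled allele.

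With this identification in hand, I would simply invoke Theorem \ref{thm:sbm} (or Theorem \ref{thm:gibbs} in the Gibbs case) to conclude that, conditional on the sample's partition having parts $(n_1,\dots,n_k)$ in some natural labelling (for instance the appearance order), the value-ordered sample frequencies form a $\sizem$-biased random permutation of $(n_1,\dots,n_k)$. The definition of $\sizem$-biased permutation stated just above Theorem \ref{thm:sbm} then says that for each $1\le i\le k$ the first component equals $n_i$ with probability $(n_i-\alpha)/(n-k\alpha)$. Composing this with the previous paragraph gives that the $i$th allele in the composition, of size $n_i$, is the oldest sampled allele with probability $(n_i-\alpha)/(n-k\alpha)$, as claimed.

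I do not expect a technical obstacle: the whole content is the dictionary between ``oldest in the sample'' and ``first block of the value-ordered sample composition,'' which is immediate once one accepts that the size-biased presentation of $\GEM(\alpha,\theta)$ or Gibbs$(\alpha)$ frequencies is being interpreted as age-ranked (as justified in the introduction via the infinitely-many-neutral-alleles model and its $0<\alpha<1$ extensions). The Watterson--Guess formula $n_i/n$ is then the specialization at $\alpha=0$, consistent with the fact that in that case $\sizem$-biasing reduces to ordinary size-biasing.
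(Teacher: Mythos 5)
Your proposal is correct and matches the paper's (implicit) argument exactly: the paper presents this corollary as an immediate consequence of Theorem \ref{thm:gibbs}, obtained precisely by identifying the oldest sampled allele with the first block of the value-ordered composition and reading off the first-pick probability $(n_i-\alpha)/(n-k\alpha)$ from the definition of a $\sizem$-biased permutation.
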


There is a combinatorial construction of the Gibbs$(\alpha)$ sample frequencies in size-biased appearance order
known as the Chinese Restaurant Process ($\CRP$)  \cite[\S 3.1]{MR2245368}.  
Our proof of Theorem \ref{thm:gibbs} and its corollary Theorem \ref{thm:sbm} 
involves 
Ordered Chinese Restaurant Process ($\OCRP$),
which produces value-ordered sample frequencies,
as considered in 
\cite[\S 5.2.1]{MR2275248}  
and \cite[\S 11]{MR2122798}, 
 and discussed further in Section \ref{sec:main}.
In comparing the  prescriptions for appearance-ordered and value-ordered frequencies in sampling from a Gibbs$(\alpha)$ model,
while there are obvious similarities between the two schemes, there are also subtle differences.
Observe first that if you are given {\em both} the appearance-ordered and the value-ordered frequencies, by exchangeability the appearance-ordering is just a size-biased
random ordering of the value-ordered frequencies.  
So the value-ordered frequencies serve as a sufficient statistic for predictions about the next sample value $X_{n+1}$.
It is a subtle point of the prediction rule given the value-ordered frequencies, spelled out in 
Corollary \ref{crl:cndind}
below, that the value-ordered frequencies provide no more information than the appearance-ordered frequencies,
so far as predicting whether $X_{n+1}$ will be a new value or not: all that matters is the number of existing clusters $k$ and the sample size $n$: the probability that the next observation is a new value depends only on $n$ and $k$,
no matter what the appearance-ordered or value-ordered frequencies of the $k$ clusters. This is a very special property of Gibbs$(\alpha)$ models, closely
associated with the Markov property of $K_n$ for these models.
What is even more interesting, considering that the probability of a new value is unaffected by the value-ordered frequencies, is that 
the value-ordered frequencies {\em do} affect the probabilities that the new observation equals one of the clusters of previous observations, as is plain from comparison of the two formulas 
\eqref{app:sb} and \eqref{freq1:sb} below. The sequential scheme for selecting a value is the same in both cases, except that 
the scheme given value-ordered frequencies puts weight $n_1 + 1 - \alpha$ on the lowest-valued cluster and 
weight $n - n_1 - (k-1) \alpha$ on the rest, whereas the scheme given appearance-ordered frequencies puts lesser weight $n_1 - \alpha$ on the first cluster to appear, 
and the same weight 
$n - n_1 - (k-1) \alpha$ 
on the rest.
So the value-ordered frequency data changes the prediction of the next observation given it is one of those previously observed,  
always pushing it to be more likely to be the lowest previous value observed $n_1$ times,  no matter what the previously observed frequencies in value-order $n_1, \ldots , n_k$.

\begin{corollary}
\label{crl:cndind}
In sampling from the limit frequencies of 
any Gibbs$(\alpha)$ model in size-biased order, 
conditionally given the number $K_n$ of distinct values in the sample, the event $X_{n+1} \notin \{X_1, \ldots, X_n\}$ of a new value at time $n+1$ is independent of the value-ordered frequencies of the sample $X_1, \ldots, X_n$.
In other words, the conditional probability of this event given value-ordered frequencies $(n_1, \ldots, n_k)$ in a sample with $K_n = k$
depends only on $n$ and $k$
and does not depend otherwise on $(n_1, \ldots, n_k)$.
\end{corollary}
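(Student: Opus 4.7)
The plan is to reduce Corollary \ref{crl:cndind} to the Markov property of the block-count sequence $K_n$ that characterizes Gibbs$(\alpha)$ partitions, the same feature flagged in the paragraph preceding the statement. Because the value-ordered frequencies are a deterministic function of $(X_1,\dots,X_n)$, and the probability of a new value conditioned on $(X_1,\dots,X_n)$ will depend on the sample only through $n$ and $K_n$, the corollary becomes essentially a one-line consequence of the tower property of conditional expectation.

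First I would recall that for a Gibbs$(\alpha)$ exchangeable partition $\Pi_n$ the $\EPPF$ factorizes as $V_{n,k}\prod_i(1-\alpha)_{n_i-1}$. A direct computation (separating the contribution of $X_{n+1}$ joining an existing block versus forming a new singleton) gives the standard $\CRP$ prediction rule
$$
\P\bigl(X_{n+1}\notin\{X_1,\dots,X_n\}\bigm|X_1,\dots,X_n\bigr)=\frac{V_{n+1,\,K_n+1}}{V_{n,\,K_n}},
$$
which depends on the full sample only through $n$ and $K_n$. This is the sharp form of the Markov property for $K_n$ that the authors invoke informally above.

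Next, since both the vector of value-ordered frequencies $(n_1,\dots,n_k)$ and the count $K_n=k$ are measurable with respect to $\sigma(X_1,\dots,X_n)$, conditioning the displayed identity on these statistics and applying the tower property yields
$$
\P\bigl(X_{n+1}\notin\{X_1,\dots,X_n\}\bigm|(n_1,\dots,n_k),\,K_n=k\bigr)=\frac{V_{n+1,\,k+1}}{V_{n,\,k}},
$$
with no residual dependence on the composition $(n_1,\dots,n_k)$ beyond the summary statistic $(n,k)$. This is precisely the claimed conditional independence.

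The only point worth scrutinizing, and hence the main (minor) obstacle, is that the $\CRP$ predictive rule must be invoked conditionally on the labelled sample $(X_1,\dots,X_n)$ rather than merely on the unlabelled partition $\Pi_n$. In the size-biased presentation this is automatic, since $(X_1,\dots,X_n)$ and the pair $(\Pi_n,\text{appearance order of blocks})$ carry the same information, and the standard derivation of the $\CRP$ rule from the factorized $\EPPF$ is insensitive to whether one retains this extra appearance-order data. Once this is observed, no further computation is required, and the corollary falls out immediately.
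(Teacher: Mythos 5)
There is a genuine gap: your key displayed identity
\begin{equation*}
\P\bigl(X_{n+1}\notin\{X_1,\dots,X_n\}\bigm|X_1,\dots,X_n\bigr)=\frac{V_{K_n+1:n+1}}{V_{K_n:n}}
\end{equation*}
is false, and the justification you offer for it in your final paragraph --- that $(X_1,\dots,X_n)$ carries the same information as $\Pi_n$ together with the appearance order of blocks --- is incorrect. The appearance order is already encoded in $\Pi_n$ (order the blocks by their least elements), whereas the labelled sample additionally records \emph{which} species in the size-biased enumeration of $\Pbul$ were observed and which were skipped, and this is genuinely informative about the posterior of $\Pbul$. A concrete counterexample with $\alpha=0$, $n=1$: for $\GEM(0,\theta)$ one has $\P(X_2=X_1\mid X_1=1)=\E[P_1^2]/\E[P_1]=2/(2+\theta)$, while $\P(X_2=X_1\mid X_1=5)=\E[P_5^2]/\E[P_5]=\bigl(\tfrac{1+\theta}{2+\theta}\bigr)^4\tfrac{2}{2+\theta}$; these differ (e.g.\ $2/3$ versus $32/243$ at $\theta=1$), even though $K_1=1$ in both cases, and only their average over the law of $X_1$ recovers $1/(1+\theta)$. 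So the "sharp form of the Markov property" you invoke does not hold at the level of the labelled sample, and the tower-property reduction collapses. Note also that you cannot instead tower down from the correct $\CRP$ rule conditioned on $\Pi_n$, because the value-ordered frequencies are not measurable with respect to $\Pi_n$.

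The paper's route cannot be short-circuited this way: the content of the corollary is precisely that, although conditioning on the value order \emph{does} change the predictive probabilities of joining each existing cluster (compare \eqref{app:sb} with \eqref{freq1:sb}), the total probability of a new value is unchanged. The proof goes through the $\OEPPF$ $\tilp=\psize\cdot p$ of the value-ordered partition (Lemma \ref{lmm:oeppf}): the conditional probability of a new value given value-ordered frequencies $(n_1,\dots,n_k)$ is the sum over the $k+1$ insertion positions of the Bayes ratios in \eqref{freq:sb}, which equals $p_{k:n}=V_{k+1:n+1}/V_{k:n}$ because the stick-breaking factors $h_\alpha(1,n_j,\dots,n_k)\prod_{i<j}[1-h_\alpha(1,n_i,\dots,n_k)]$ telescope to $1$. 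That cancellation is the actual mathematical content you need to supply.
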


The rest of this article is organized as follows. In the next Section we introduce the notation and recall some notions we use.
In Section \ref{sec:main} we formulate and prove our main result, Theorem \ref{thm:gibbs}, and also discuss the $\OCRP$ which 
produces value-ordered sample frequencies step by step. 
In Section \ref{sec:related} we 
present an alternative computational proof of Theorem \ref{thm:sbm} and also derive some consequences from the $\OCRP$
description of Corollary \ref{crl:ocrp}. This allows us to reproduce well-known results for the $\GEM(0,\theta)$ model
with this new approach, thus providing an additional check for it. Finally, in appendices we collect 
some basic facts about a generalization of the $\sizem$-biasing procedure, and compare the value-ordering used in this paper with the regenerative ordering 
of \cite{MR2122798}. 

\section{Background and notation}
\label{sec:bac}

\subsection{Partitions generated by random samples}
\label{sec:partitionsfromsamples}

Let $\Pi_n$ denote the random partition of $n$ generated by the sample values $X_1, \ldots, X_n$. That is, 
if there are say $K_n= k$ distinct sample values $X_1, \ldots, X_n$, 
the partition  of the set $[n]:= \{1, \ldots, n\}$ is
\begin{equation}
\label{pinc}
\Pi_n := \{ C_1, \ldots, C_k \}
\end{equation}
with $C_1:= \{i \le n: X_i = X_{M(1)} \}$,
where $M(1) = 1$ is the least element of both $[n]$ and $C_1$,
and if $k \ge 2$ then $C_2:= \{i \le n: X_i = X_{M(2)} \}$, where $M(2)$ is the least element of both $[n] \setminus C_1$ and of $C_2$,
and so on. These {\em clusters $C_i$} generated by the sample, are listed here in their {\em order of appearance}.  
We are interested in various orderings of these clusters.
Each ordering of clusters induces a list of their sizes in that order, 
which is a random sequence of strictly positive integers with sum $n$,
called a {\em random composition of $n$}. 
The  {\em value} of cluster $C_i$ is the common value of $X_j$ for every $j \in C_i$.
We are particularly concerned with:
\begin{itemize}
\item The {\em cluster sizes in appearance-order}: $\Nstar_{\bullet:n}:= (n_1, \ldots, n_k)$ if cluster $C_j$ as above has $n_j$ members for each $1 \le j \le k$.
\item The {\em clusters in value-order} define a {\em random ordered partition of $[n]$} 
\cite[\S 11]{MR2122798} 
\cite[\S 5.2.1]{MR2275248}  
\begin{equation}
\label{tilpinc}
\tilPi_n:= (\tilC_1, \ldots , \tilC_k):= (C_{\sigmapi(1:n)}, C_{\sigmapi(2:n)}, \ldots , C_{\sigmapi(k:n)})
\end{equation}
for some permutation $\sigmapi(\bullet:n)$ of $[k]$, which encodes the additional value-order structure.  
Explicitly:
\begin{equation}
\label{tilpinc1}
\tilC_1:= \{ i \in [n] : X_i = \smash[b]{\min_{j \in [n] } X_j \} }
\end{equation}
and if $\tilC_1 \ne [n]$ then
\begin{equation}
\label{tilpinc2}
\tilC_2:= \{ i \in [n] : i \notin \tilC_1, X_i = \min_{j \in [n] \setminus \tilC_1} X_j \} 
\end{equation}
and so on.
Notice that for $\tilPi_n$ with $K_n$ clusters the permutation $\sigmapi(\bullet:n)$ of $[K_n]$ is encoded in the state $\tilPi_n$: the inverse of $\sigmapi(\bullet:n)$ 
is obtained by rearranging the clusters  $(\tilC_1, \ldots , \tilC_k)$ in order of their least elements.
\item the {\em cluster sizes in value-order\/}: $\Nright_{\bullet:n}$ with $\Nright_{i:n}  = \# \tilC_i = n_{\sigmapi(i:n)}$ for $1 \le i \le K_n$.
This is the sequence of sizes of clusters in increasing  order of their common $X$-values. For instance 
\begin{gather*}
\mbox{ $\Nright_{1:n}$ is the number of $j$ such that $X_j = \min_{1 \le i \le n} X_i$,  }\\
\mbox{ $\Nright_{K_n:n}$ is the number of $j$ such that $X_j = \max_{1 \le i \le n} X_i$. }
\end{gather*}
\item The {\em partition of $n$ generated by the sample}: $\Ndec_{\bullet:n}$  is the weakly decreasing rearrangement of either $\Nstar_{\bullet:n}$ or $\Nright_{\bullet:n}$.
\end{itemize}

We illustrate these definitions by an adaptation of Kingman's paintbox model 
\cite{MR509954} 
 for generating random partitions.
Let $(I_j)$ be the interval partition of $[0,1]$ defined by
\begin{equation}
\label{sbintpart}
I_1 = (0, P_1), \qquad I_2 = (P_1, P_1 + P_2), \qquad   I_3 = (P_1 + P_2, P_1 + P_2 + P_3) 
\end{equation}
and so on. Define the sample values by $X_i = j$ if $U_i \in I_j$ where $U_i$ is a sequence of i.i.d.\ uniform$[0,1]$ variables. 
In the following display a particular realization of the successive partial sums of probabilities $P_1, P_1 + P_2, \ldots$ is marked by a series of vertical bars $|$ in
a unit interval $[0,1]$. Then $n = 6$ sample points $U_i$ picked from $[0,1]$ landed between the bars as indicated:
\begin{equation}
\label{exampleU}
[\underbrace{0   \vphantom{U_X}      \qquad \quad}_1 |\underbrace{    \vphantom{U_X}\quad     }_2  | \underbrace{  \quad  U_6 \;  \quad  U_5  \,}_3| 
\underbrace{ \vphantom{U_X} \qquad}_4 |\underbrace{ U_3 }_5 |\underbrace{\;\vphantom{U_X} }_6  |\underbrace{\quad\vphantom{U_X} }_7  | \underbrace{\; U_2 \quad U_1 \qquad U_4}_8|\underbrace{ \quad\vphantom{U_X} }_{9\lefteqn{\qquad\ldots}}  | 
|||||..... 1]
\end{equation}
Regarding the bars as separators between interval boxes with labels $1,2,3, \ldots$ shown under the braces, various quantities under consideration are in this instance:
\begin{itemize}
\item the sample from $\Pbul$ is $(X_1, \ldots, X_6) = (8,8,5,8, 3,3)$;
\item the partition of $[6]$ is $\Pi_6 = \bigl\{ \{1,2,4 \}, \{3\}, \{5,6\} \bigr\}$;
\item the clusters in appearance-order are $(C_1, C_2, C_3) = \bigl( \{1,2,4 \}, \{3\}, \{5,6\}\bigr)$;
\item the cluster sizes in appearance-order are $\Nstar_{\bullet:6} = (3,1,2)$;
\item the clusters in value-order are $(\tilC_1,\tilC_2,\tilC_3)=(C_3, C_2, C_1) = \bigl( \{5,6\}, \{3\}, \{1,2,4 \}\bigr)$;
\item the cluster sizes in value-order are $\Nright_{\bullet:6} = (2,1,3)$, corresponding to numbers of repeated values in the increasing rearrangement $(3,3,5,8,8,8)$ of the sample; 
\item the partition of $6$ defined by the cluster sizes is  $\Ndec_{\bullet:6} = (3,2,1)$.
\end{itemize}

For any partition $\{C_1,\dots,C_k\}$ of $[n]$ the probability 
of the basic event \eqref{pinc} that this particular partition is generated
by an exchangeable sample $X_1,\dots,X_n$
 depends just on cluster sizes $n_i=\#C_i$, so defines a
function $p(n_1,\dots,n_k)$  of compositions 
$(n_1,\dots,n_k)$  of $n$.
Following \cite[\S 2.2]{MR2245368},  
this function of compositions of $n$ is called  the {\em exchangeable partition probability function} $(\EPPF)$ of $\Pi_n$.
For each fixed $k$ the $\EPPF$ is a symmetric function of $k$ positive integer arguments. As $n$ varies, the $\EPPF$ satisfies 
an addition rule
\cite[(2.9)]{MR2245368} 
reflecting the consistency property of the random partitions, that $\Pi_m$ is the restriction to $[m]$ of $\Pi_n$ for each $m < n$.
However one can also consider the
$\EPPF$ for a fixed $n$, as we do in Lemmas \ref{lmm:sbias} and \ref{lmm:oeppf} in Appendix \ref{sec:pseudo}. 

Similarly, for an ordered partition $(C_1,\dots,C_k)$ of $[n]$ in some order,
with $n_i=\#C_i$, the probability $\tilp(n_1,\dots,n_k)$ of the event 
that this particular ordered partition is obtained by some ordered partition construction from an exchangeable sample is called an
{\em ordered exchangeable partition probability function} ($\OEPPF$).
This function may no longer be symmetric in $(n_1, \ldots, n_k)$. The term exchangeable
means only that the probability of achieving the ordered partition $(C_1,\dots,C_k)$ 
depends only on sizes $(n_1,\dots,n_k)$ of clusters of the partition.
 As $n$ varies, the $\OEPPF$ will also satisfy some consistency relations, 
see \cite[Eqs. (2), (3)]{MR2122798}.

It is a well known consequence of exchangeability of $\Pi_n$, that no matter what $\Pbul$
\begin{equation}
\label{starsb}
\mbox{ $\Nstar_{\bullet:n}$ is a size-biased random permutation of $\Ndec_{\bullet:n}$, as well as of $\Nright_{\bullet:n}$.}
\end{equation}
As the sample size $n \to \infty$, it follows easily from the strong law of large numbers and \eqref{starsb} that no matter what the distribution of $\Pbul$,
there is the almost sure convergence of relative cluster sizes
\begin{equation}
\label{asconv}
n^{-1}\,\Nright_{\bullet:n} \to \Pbul \mbox{ and } n^{-1}\,\Nstar_{\bullet:n} \to \Pbul ^* \mbox{ almost surely }
\end{equation}
where $\Pbul^*$ is  a size-biased random permutation of $\Pbul$.  Consequently
\begin{equation}
\label{isbp}
\mbox{ if $\Pbul$ is in a size-biased random order, then $\Pbul \ed \Pbul^*$. }
\end{equation}
Such a random discrete distribution $\Pbul$ is said to be {\em invariant under size-biased permutation } ($\ISBP$).
This condition plays a central role 
in the theory of partitions generated by random sampling, for a number of reasons. One reason is that if $\Pbul$ is $\ISBP$, there is a 
simple general formula 
\cite{MR1337249}
\cite[(3.4)]{MR2245368}  
for the probability of the basic event \eqref{pinc} for any particular partition $\{ C_1, \ldots, C_k \}$ of $[n]$, in terms of multivariate  moments of $\Pbul$. Another reason 
is that distributions that are $\ISBP$
are precisely the distributions of frequencies of species in appearance-order in any exchangeable species sampling model with proper frequencies  
\cite{MR2245368}.  
Especially in contexts where there 
is no a priori natural ordering of frequencies by positive integers, for instance in the setting of population genetics where different alleles might be identified only by some biochemical tag,
or in the theory of interval partitions generated by the zeros of stochastic processes 
\cite{MR2245368}  
one may as well use the size-biased ordering whenever that is tractable, because of its close connection to partition probabilities.

\subsection{Pseudo-size biased random order}
\label{sec:pseudointro}

As it was mentioned in the Introduction, the value-ordering of clusters of $\Pi_n$ involves the procedure of $\sizem$-biased random
permutation of component sizes.  We treat the $\sizem$-biased permutation an instance of a more general 
notion of an $s$-biased permutation \cite{MR1156448}, where $s$ is some strictly positive function of a cluster size called {\em pseudo-size}.  
For a partition $\Pi_n=\{C_1,\dots,C_k\}$ of $[n]$, with $\#C_i=n_i$  for $i\in[k]$, and such a function $s$,
an {\em $s$-biased pick} is a randomly chosen cluster of $\Pi_n$, which given $\Pi_n$ equals cluster $C_i$ with
probability $s(n_i)\big/(s(n_1)+\dots+s(n_k))$. An {\em $s$-biased permutation} is a random permutation 
of clusters in order of an exhaustive process of sampling without replacement by a sequence of $s$-biased picks. 
The usual {\em size-biased permutation} of $\Pi_n$ is just its $s$-biased permutation for 
the choice of the pseudo-size $s(m):=m$ to be just the ordinary size. 

With the pseudo-size function $s$ we associate the probability 
\begin{equation}
\label{psize:def}
\psize( n_1, \ldots, n_k):= \prod_{i=1}^k \frac{ s(n_i) }  { s(n_i) + \cdots + s(n_k) }  
\end{equation}
that an ordered collection of clusters $(C_1,\dots,C_k)$ of sizes $(n_1,\dots,n_k)$ stays exactly in the same order
after $s$-biased permutation. Notice that this is {\em not} the probability that after $s$-biased
permutation of clusters their sizes will be $(n_1,\dots,n_k)$. That probability is $\psize( n_1, \ldots, n_k)$ multiplied by an appropriate combinatorial coefficient.
We shall need the following result which we prove in  Appendix \ref{sec:pseudo}.

\begin{lemma}
\label{lmm:order}
Let\/ $\Pi_n$ be an exchangeable random partition of\/ $[n]$ with\/ $\EPPF$ $p$. Consider a strictly positive size function $s(m)$ 
of positive integers $m \le n$, and associate with it a function $\psize$ of compositions of $n$ as in \eqref{psize:def}.
\begin{itemize}
\item[(i)] If\/ $\Ns_{\bullet:n}$ is a listing of 
sizes of clusters of an $s$-biased permutation of\/ $\Pi_n$, then the probability function of\/ $\Ns_{\bullet:n}$ on compositions of\/ $n$ is
\begin{equation}
\label{s-bias}
\BP[ \Ns_{\bullet:n} = (n_1, \ldots , n_k) ] = {n \choose n_1, \ldots , n_k} \, \psize (n_1, \ldots, n_k) \, p(n_1, \ldots, n_k). 
\end{equation}
\item[(ii)]
Conversely, if the probability function of a random composition $\Ns_{\bullet:n}$ is given by \eqref{s-bias} for all compositions of $n$, for
some symmetric function $p$ of compositions, then  $p$ is an $\EPPF$, and 
$\Ns_{\bullet:n}$ has the same distribution as an $s$-biased random ordering of component sizes of $\Pi_n$ with $\EPPF$ $p$.
\end{itemize}
\end{lemma}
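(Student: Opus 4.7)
The plan is to establish (i) by direct computation that mirrors the sequential definition of an $s$-biased permutation, then to deduce (ii) by reversing the same bookkeeping to extract the normalization.

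For (i), I realize $\Ns_{\bullet:n}$ as the outcome of a two-stage construction: first sample $\Pi_n$ with $\EPPF$ $p$, then reorder its clusters by sequential $s$-biased sampling without replacement. For any fixed composition $(n_1,\ldots,n_k)$ of $n$, the event $\{\Ns_{\bullet:n}=(n_1,\ldots,n_k)\}$ is the disjoint union, over ordered partitions $(D_1,\ldots,D_k)$ of $[n]$ with $\#D_i=n_i$, of the events that $\Pi_n=\{D_1,\ldots,D_k\}$ and that the $s$-biased permutation of its blocks yields exactly this order. The former has probability $p(n_1,\ldots,n_k)$ by definition of the $\EPPF$; the latter, conditioned on the former, is $\psize(n_1,\ldots,n_k)$ by \eqref{psize:def}, since that probability depends on the blocks only through their sizes. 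As $\binom{n}{n_1,\ldots,n_k}$ counts the ordered partitions of $[n]$ with prescribed block sizes, summing produces \eqref{s-bias}.

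For (ii), I sum the right-hand side of \eqref{s-bias} over all compositions of $n$ and use that the total must equal $1$. Reinterpreting the multinomial coefficient as the number of ordered partitions of $[n]$ with the prescribed block sizes, the sum rewrites as
\begin{equation*}
\sum_{(D_1,\ldots,D_k)} p(\#D_1,\ldots,\#D_k)\,\psize(\#D_1,\ldots,\#D_k),
\end{equation*}
where the outer sum ranges over all ordered partitions of $[n]$. Grouping by the underlying unordered partition $\{C_1,\ldots,C_k\}$ and using symmetry of $p$, the inner sum collapses to $\sum_{\tau\in\perms{k}}\psize(\lambda_{\tau(1)},\ldots,\lambda_{\tau(k)})$ with $\lambda_i=\#C_i$. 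Because $\psize$ is by construction the law of the order produced by sequential $s$-biased sampling without replacement, this inner sum equals $1$, so the total reduces to $\sum_{\{C_1,\ldots,C_k\}} p(\#C_1,\ldots,\#C_k)=1$, which is precisely the $\EPPF$ normalization at level $n$. Part (i) applied to this $\EPPF$ then identifies the distribution of $\Ns_{\bullet:n}$ as that of the $s$-biased permutation.

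The one identity worth flagging is $\sum_{\tau\in\perms{k}}\psize(\lambda_{\tau(1)},\ldots,\lambda_{\tau(k)})=1$ for every $k$-tuple $(\lambda_1,\ldots,\lambda_k)$. Conceptually this is automatic from the construction of $\psize$ as a probability law on $\perms{k}$; a self-contained verification is a short induction on $k$, factoring out the $\tau(1)=j$ contribution $s(\lambda_j)/(s(\lambda_1)+\cdots+s(\lambda_k))$ and applying the inductive hypothesis to the remaining $k-1$ entries. Beyond that, the argument is pure bookkeeping, hinging on the fact that both $p$ and $\psize$ see a block only through its size, so that they combine cleanly with the multinomial counting of labelled block assignments.
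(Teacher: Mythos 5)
Your proposal is correct and follows essentially the same route as the paper: part (i) is the same decomposition over ordered partitions with the multinomial count (the paper packages the per-ordered-partition probability as Lemma \ref{lmm:oeppf}), and part (ii) rests on the same identity \eqref{sigm:pisum} together with symmetry of $p$. The only cosmetic difference is that for (ii) you verify the $\EPPF$ normalization of $p$ at level $n$ directly, whereas the paper routes the same sum through the Gnedin--Pitman formula for the $\EPPF$ associated with a random composition; both then invoke part (i) to identify the distribution.
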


\subsection{Gibbs partitions}
\label{sec:gibbs}

We are particularly interested in the $\EPPF$s which can be represented in the Gibbs form
\begin{equation}
\label{gibbseppf}
p(n_1,\dots,n_k)=V_{k:n}\prod_{i=1}^kw(n_i),\qquad\mbox{where }n=\sum_{i=1}^kn_i,
\end{equation}
for some positive weights $V_{k:n}$, $1 \le k \le n$ and $w(1), w(2), \ldots$. 
It is  known 
\cite[Theorem 4.6]{MR2245368}  
\cite{MR2160320} 
that the only $\EPPF$s of this form which are produced by sampling from some frequencies $P_j$ 
that are strictly positive for all $j$ are those obtained by taking 
\begin{equation}
\label{gibbsw}
w(n) =  (1-\alpha)_{n-1} \mbox{  for some } 0 \le \alpha < 1,
\end{equation}
with $(x)_n:=\Gamma(x+n)/\Gamma(x)$ the rising factorial.
For such weights $w(\cdot)$ it is easy to see 
\cite{MR2160320} 
that $V_{k:n}$ must satisfy the consistency relation 
\begin{equation}
\label{consistency}
V_{k:n}=(n-k\alpha) V_{k:n+1}+V_{k+1:n+1}\qquad(1\le k\le n<\infty).
\end{equation}
Following 
\cite[Definition 3]{MR2160320} 
we call an exchangeable partition of positive integers  with $\EPPF$ of form \eqref{gibbseppf} with $w$ weights given by \eqref{gibbsw} and $V$ weights subject to \eqref{consistency}
a {\em Gibbs\/$(\alpha)$ partition}. For a given $\alpha \in [0,1)$, the collection of all arrays of 
nonnegative weights $V_{k:n}$ satisfying \eqref{consistency} is a convex set
\cite[Theorem 4.6]{MR2245368}  
\cite{MR2160320}. 
For each $\alpha$ there is a one-parameter family of extreme weights. These are indexed by $\theta \ge 0$  for $\alpha = 0$, and 
$0 < \alpha < 1$ by another parameter $\ell \ge 0$, called the {\em $\alpha$-diversity} of the associated random partition in \cite{MR2245368}. 
In both cases, by general convex analysis, every consistent family of weights $V_{k:n}$ admits a unique integral representation over this one-parameter family of extreme weight arrays.

For each 
$\alpha \in [0,1)$ and $\theta > - \alpha$ there is the distinguished family of weights
\begin{equation}
\label{althweights}
V_{k:n}(\alpha,\theta):=  \frac{1}{(1 + \theta)_{n-1} } \prod_{i=1}^{k-1} ( \theta + i \alpha).
\end{equation}
It is easily checked that \eqref{consistency} holds for these special weights $V_{k:n} = V_{k:n}(\alpha,\theta)$, so
\eqref{althweights} provides an instance of Gibbs($\alpha$) exchangeable partition. It is known
\cite{MR2160323}
that such $V_{k:n}$ are the only weights of form $V_k/c_n$ for some positive sequences $V_\bullet$ and $c_\bullet$, and that the weights
\eqref{althweights} produce the $\EPPF$ of a random partition of positive integers whose frequencies in order of appearance have the $\GEM(\alpha,\theta)$ distribution
described in the Introduction \cite{MR2245368}. 

Gibbs partitions were introduced in \cite{MR2004330} 
and further developed in \cite{MR2160320}, and received much attention in recent probabilistic and statistical literature. 
A wide range of Gibbs $\EPPF$ in terms of special functions can be found in \cite{james-lau-gibbs}. 
Not trying to provide a full review of the literature, we mention papers 
\cite{MR2479467} 
\cite{MR2469329} 
\cite{cerquetti2009generalized} 
\cite{MR3035269} 
\cite{MR3051207} 
\cite{MR3252637} 
\cite{MR3439188} 
\cite{de2015gibbs} 
which deal with various statistical applications of Gibbs-type priors.  
Interpretations of Gibbs partitions in terms of records were developed in \cite{MR2336601}.   
Some features of the construction we use in Section \ref{sec:related} are interpreted 
in terms of Bayesian inference for species sampling in 
\cite{MR2434179} 
and \cite{MR3322311}. 

\subsection{The Chinese restaurant process}
\label{sec:crp}

The {\em Chinese Restaurant Process }($\CRP$)  \cite[\S 3.1]{MR2245368}  
provides an intuitive expression of various notions in sampling from random discrete distributions,
with its metaphorical customers arriving to be seated at tables in the restaurant, with interpretations in various contexts,  of
\begin{itemize}
\item {\em customers} corresponding to individuals/tokens/elements;

\item {\em tables} corresponding  to values/alleles/species/types/cycles/clusters/blocks/intervals.
\end{itemize}
In the basic $\CRP$ as described in \cite[\S 3.1]{MR2245368}  
an exchangeable random partition of positive integers is constructed sequentially.
Starting from a first customer assigned to table $1$, after $n$ customers have been assigned to some $k$ tables labeled by $1,2, \ldots,k$ in appearance-order,
with say $n_i$ customers seated at table $i$, for $1 \le i \le k$, 
there is a probabilistic rule for assigning customer $n+1$ either to one of the $k$ tables 
already occupied 
or to a new table.
In the ecological context of species sampling, the customers are individuals and assigning a new customer to one of previously occupied tables corresponds
to observing an individual of some previously seen species, while introducing a new table corresponds to sampling an individual of some species previously unseen.
In this basic $\CRP$, the relative frequencies of customers occupying the tables in order of appearance  converge back to $P_\bullet^*$, the size-biased permutation of population frequencies,
as in \eqref{asconv}.

In the context of Gibbs$(\alpha)$ partitions with $\EPPF$ \eqref{gibbseppf}, 
given appearance-ordered frequencies $(n_1, \ldots, n_k)$ in a sample of size $n$, the
appearance-ordered frequencies in a sample of size $n+1$ are obtained by
\begin{itemize}
\item either adding the frequency $n_{k+1}=1$ (discovering a new species) with probability 
\begin{equation}
\label{gibbspnk}
p_{k:n}:=\frac{p(n_1,\dots,n_k,1)}{p(n_1,\dots,n_k)}=\frac{V_{k+1:n+1}}{V_{k:n}},\qquad(1\le k\le n);
\end{equation}
\item or, for some $i\in[k]$, incrementing $n_{i}$ by $1$ (new observation is the $i$th species in order of appearance) with probability 
\begin{align}
\label{app:sb}
\notag
\frac{p(n_1,\dots,n_i+1,\dots,n_k)}{p(n_1,\dots,n_k)}
&{}= (1 - p_{k:n}) \frac{ n_i - \alpha}{ n-k \alpha } \\
&{}= (1 - p_{k:n}) h_\alpha(n_i, \ldots, n_k) \prod_{j=1}^{i-1} \bigl[ 1 -  h_\alpha(n_j, \ldots, n_k)  \bigr]
\end{align}
where for a composition $(n_1, \ldots, n_k)$ of $n$
\begin{equation}
\label{app:sb1}
h_\alpha(n_1, \ldots, n_k)  := \frac{ n_1 - \alpha } {n_1 - \alpha + \cdots + n_k - \alpha } =  \frac{ n_1 - \alpha }{ n - k \alpha }
\end{equation}
is the probability of choosing the first cluster in a $\sizem$-biased pick from $k$ distinct clusters of sizes $n_1, \ldots, n_k$.
\end{itemize}
The consistency relation \eqref{consistency} ensures that these probabilities sum to $1$.  
The second, product form in \eqref{app:sb} emphasizes the idea that a single cluster can be chosen from the existing clusters of sizes $(n_1, \ldots, n_k)$,
in a $\sizem$-biased fashion, by a succession of $\sizem$-biased choices, the first to decide if it is the first cluster or not, if not whether it is the second
cluster, and so on.
In particular, for $\GEM(\alpha,\theta)$ frequencies defined by \eqref{althweights} one has
\begin{equation}
\label{gem:disc}
p_{k:n} = \frac{ \theta + k \alpha} { \theta + n } \qquad\mbox{and}\qquad \frac{p(n_1,\dots,n_i+1,\dots,n_k)}{p(n_1,\dots,n_k)}=\frac{n_i-\alpha}{\theta+n}\,.
\end{equation}
Applying this procedure step by step leads
to appearance-ordered tables, with relative frequencies converging to size-biased permutation of the Gibbs$(\alpha)$ probabilities, as in \eqref{asconv},
which have the same distribution as the original frequencies just in the $\ISBP$ $\GEM(\alpha,\theta)$ case.

\section{Main results}
\label{sec:main}

Our central result is the the following more refined version of Theorem \ref{thm:sbm}:
\begin{theorem}
\label{thm:gibbs}
Fix $0 \le \alpha < 1$. Suppose that $X_1, \ldots, X_n$  is a random sample from 
$\Pbul$ which is the size-biased presentation of limit frequencies of a Gibbs$(\alpha)$ partition of positive integers. Let $p$ be the $\EPPF$,
as in \eqref{gibbseppf}--\eqref{gibbsw}, corresponding to \eqref{althweights} for $\Pbul$ with $\GEM(\alpha,\theta)$ distribution.
Then the composition probability function of
$\Nright_{\bullet:n}$, the sequence of sizes of clusters of $X$-values in increasing order of those values, is given by the formula
\begin{align}
\label{rightform} 
\BP[ \Nright_{\bullet:n} = (n_1,\ldots, n_k) ] = {n \choose n_1, \ldots , n_k } \left( \prod_{i=1}^k  \frac{n_i - \alpha} {n_i - \alpha + \cdots + n_k - \alpha} \right) \, p(n_1, \ldots, n_k)
\end{align}
for each composition $(n_1,\ldots, n_k)$ of $n$.
Moreover,
\begin{equation}
\label{sbmagic-repeat}
\mbox{ $\Nright_{\bullet:n}$ is a $\sizem$-biased permutation of $\Nstar_{\bullet:n}$}
\end{equation}
where the composition probability function of $\Nstar_{\bullet:n}$  is given by the right side of \eqref{rightform} with the $\sizem$-biasing product replaced by the ordinary
size-biasing product with every $\alpha$ replaced by $0$, as in the known formula \eqref{count:dt}.
\end{theorem}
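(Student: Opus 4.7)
My plan is to establish \eqref{rightform} first, and then derive \eqref{sbmagic-repeat} from it as a direct consequence of Lemma~\ref{lmm:order}.

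I would begin by reducing \eqref{rightform} to a single moment identity. Fix any ordered partition $(\tilC_1,\ldots,\tilC_k)$ of $[n]$ with $|\tilC_j|=n_j$. The event $\{\tilPi_n=(\tilC_1,\ldots,\tilC_k)\}$ says precisely that for some distinct labels $z_1<z_2<\cdots<z_k$ one has $X_i=z_j$ for every $i\in\tilC_j$. Conditioning on $\Pbul$ and summing gives
\[
\BP[\tilPi_n=(\tilC_1,\ldots,\tilC_k)]=\BE\biggl[\sum_{z_1<z_2<\cdots<z_k}\prod_{j=1}^k P_{z_j}^{n_j}\biggr]=:q(n_1,\ldots,n_k),
\]
an expression depending on the ordered partition only through the sizes. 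Since there are $\binom{n}{n_1,\ldots,n_k}$ such ordered partitions, \eqref{rightform} reduces to the identity
\[
q(n_1,\ldots,n_k)=p(n_1,\ldots,n_k)\prod_{i=1}^k\frac{n_i-\alpha}{n_i+n_{i+1}+\cdots+n_k-(k-i+1)\alpha}.
\]

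The central obstacle is establishing this identity. I would proceed by induction on $k$, exploiting the residual self-similarity of the size-biased presentation of Gibbs$(\alpha)$ frequencies. The case $k=1$ reduces to $q(n)=\BE[\sum_{j\ge 1}P_j^n]=p(n)$, which matches the trivial $\sizem$-biased factor. For the inductive step, split on the value of the smallest label $z_1=j$ and pull $P_j^{n_1}$ outside the inner sum: conditionally on the first $j$ atoms, the rescaled tail $(P_{j+1},P_{j+2},\ldots)/(1-P_1-\cdots-P_j)$ again carries a size-biased presentation of a Gibbs$(\alpha)$ partition, with weights shifted according to \eqref{consistency}. Applying the inductive hypothesis to the inner sum yields $p(n_2,\ldots,n_k)$ together with the $\sizem$-biased factors corresponding to $(n_2,\ldots,n_k)$, while the outer sum on $j$, combined with the CRP prediction rules \eqref{gibbspnk}--\eqref{app:sb}, supplies both the remaining factor $(n_1-\alpha)/(n-k\alpha)$ and the Gibbs weight $V_{k:n}$. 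A convenient alternative route is to postulate the Ordered Chinese Restaurant Process transitions described in the Introduction (weight $n_1+1-\alpha$ on the lowest-valued cluster, weight $n_i-\alpha$ on each other cluster, and an appropriate insertion rule for new clusters at each of the $k+1$ value-order positions), to check using \eqref{consistency} that these probabilities sum to one, and then to verify by induction on $n$ that they reproduce the claimed composition probability \eqref{rightform} at every step. The Gibbs form of $p$ is the algebraic ingredient that produces precisely $(k-i+1)$ copies of $\alpha$ in each denominator.

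Finally, comparing \eqref{rightform} with \eqref{s-bias} in the case $s(m)=m-\alpha$, Lemma~\ref{lmm:order}(ii) identifies $\Nright_{\bullet:n}$ as a $\sizem$-biased random permutation of the cluster sizes of $\Pi_n$. Since \eqref{starsb} identifies $\Nstar_{\bullet:n}$ as a usual size-biased permutation of the same cluster sizes, and since any $s$-biased permutation depends on its input only through the multiset of component sizes, $\Nright_{\bullet:n}$ is equidistributed with a $\sizem$-biased permutation of $\Nstar_{\bullet:n}$, yielding \eqref{sbmagic-repeat}. The composition formula for $\Nstar_{\bullet:n}$ asserted in the theorem then follows directly from Lemma~\ref{lmm:order}(i) applied with $s(m)=m$.
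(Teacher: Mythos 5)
Your reduction of \eqref{rightform} to the moment identity for $q(n_1,\ldots,n_k)=\BE\bigl[\sum_{z_1<\cdots<z_k}\prod_j P_{z_j}^{n_j}\bigr]$ is sound, and your final paragraph correctly extracts \eqref{sbmagic-repeat} from \eqref{rightform} via Lemma \ref{lmm:order}. But the central inductive step has a genuine gap for general Gibbs$(\alpha)$. Your induction needs the expectation $\BE\bigl[P_{z_1}^{n_1}\cdot(\text{tail functional of }P_{z_1+1},P_{z_1+2},\ldots)\bigr]$ to factor into an outer term times an application of the inductive hypothesis to a rescaled tail that is ``again a size-biased presentation of a Gibbs$(\alpha)$ partition with shifted weights.'' That factorization requires the stick-breaking factors $H_i$ in \eqref{stickbreak} to be independent, i.e.\ $\Pbul$ to be a $\RAM$; by \eqref{gemisbp} the only $\ISBP$ $\RAM$s are the $\GEM(\alpha,\theta)$ models. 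For a general Gibbs$(\alpha)$ partition the size-biased frequencies are a mixture over the extreme ($\theta$- or $\ell$-indexed) members of the family, the $H_i$ are dependent, and the recursion does not close; this is precisely the point made around \eqref{recursive}--\eqref{recursive-gem}, where the paper carries out your computation as an alternative proof of Theorem \ref{thm:sbm} \emph{for $\GEM(\alpha,\theta)$ only}, noting that even there the recursion connects different parameter values $(\alpha,\theta)\mapsto(\alpha,\theta+m\alpha)$ rather than reproducing the same model. Your fallback route via postulated $\OCRP$ transitions has a complementary gap: one can check the transition probabilities sum to one and that \eqref{rightform} solves the resulting recursion, but one must still prove that this Markov chain \emph{is} the value-ordered frequency process of a sample from $\Pbul$ in its given order --- which is the actual content of the theorem, not a verification step (the paper makes exactly this point in the remarks following Corollary \ref{crl:ocrp}).

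The paper's own proof avoids the computation entirely. It invokes Lemma \ref{lmm:key}: realize $(X_1,\ldots,X_n)$ by running the Gibbs$(\alpha)$ $\CRP$ past time $n$ and labelling the tables in the order the ``secondary'' customers $n+1,n+2,\ldots$ rediscover them. The prediction rule \eqref{app:sb} shows that, given a secondary customer sits at one of the not-yet-rediscovered initial tables with appearance-ordered occupancies $(n_1,\ldots,n_k)$, it chooses table $i$ with probability proportional to $n_i-\alpha$; hence the value-order (= rediscovery order) is exactly a $\sizem$-biased permutation of the appearance order, conditionally on $\Nstar_{\bullet:n}$. This gives \eqref{sbmagic-repeat} in its strong conditional form in one stroke for every Gibbs$(\alpha)$ model, and \eqref{rightform} then follows from Lemma \ref{lmm:order}. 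Note also that your derivation of \eqref{sbmagic-repeat} yields only equality of unconditional distributions, whereas the statement (and the paper's argument) gives the conditional law of $\Nright_{\bullet:n}$ given $\Nstar_{\bullet:n}$. If you want to keep a computational proof, restrict it to $\GEM(\alpha,\theta)$ and track the parameter shift explicitly as in \eqref{recursive-gem}; for the general Gibbs$(\alpha)$ statement you need the secondary-customer construction or an equivalent structural argument.
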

Our proof of this result makes use of the following key lemma:

\begin{lemma}
{\em \cite[Proposition 3.1]{2017gem}}
\label{lmm:key}
Consider an exchangeable random partition $\Pi_\infty:= (\Pi_n)$ of the set $\BN$
of positive integers, with proper frequencies, meaning that all clusters of\/ $\Pi_\infty$ are infinite almost surely. Fix $n \ge 1$.
Let $\CC_1$ be the cluster of\/ $\Pi_\infty$ containing $n+1$, and for $k \ge 1$, given that $\cup_{i=1}^k \CC_i \ne \BN$, let $\CC_{k+1}$ be the cluster of\/ $\Pi_\infty$ containing the least $m > n + 1$ with
$m \notin \cup_{i=1}^k \CC_i$. Define $\Pbul$ by setting $P_k$ equal to the almost sure limiting relative frequency of $\CC_k$, and
let $X_i:= j$ iff\/ $i \in \CC_j$.  Then
\begin{itemize}
\item $\Pbul$ is a size-biased ordering of frequencies of\/ $\Pi_\infty$;
\item $(X_1, \ldots, X_n)$ is a sample of size $n$ from $\Pbul$;
\item the partition of $[n]$ generated by $(X_1, \ldots, X_n)$ is\/ $\Pi_n$.
\end{itemize}
\end{lemma}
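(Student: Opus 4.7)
The plan is to realize $\Pi_\infty$ via Kingman's paintbox. Since $\Pi_\infty$ is exchangeable with proper (all-infinite) blocks, one can enrich the probability space to carry its ranked frequencies $\Pdecbul = (P_1^\downarrow, P_2^\downarrow, \ldots)$ (with $\sum_j P^\downarrow_j = 1$ a.s.), an associated interval partition of $[0,1]$ whose subintervals have these lengths, and an independent i.i.d.\ uniform$[0,1]$ sequence $U_1, U_2, \ldots$, in such a way that two indices $i, j$ lie in the same block of $\Pi_\infty$ exactly when $U_i$ and $U_j$ fall in the same subinterval. I would run the entire argument on this enriched space.

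First I would verify that $\Pbul$ is a size-biased ordering of $\Pdecbul$, by induction on $k$. For $k=1$: conditional on the paintbox, $U_{n+1}$ is uniform on $[0,1]$ and independent of everything else, hence lands in the subinterval of length $P^\downarrow_j$ with probability $P^\downarrow_j$; this is by definition a size-biased pick from $\Pdecbul$. For the inductive step, given $\CC_1, \ldots, \CC_k$ the least index $m > n+1$ with $m \notin \bigcup_{i \le k} \CC_i$ has its $U_m$ conditionally uniform on the complement of the intervals already used, so the next cluster $\CC_{k+1}$ is a size-biased pick from the surviving frequencies: this is the recursive definition of size-biased ordering.

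Next I would prove that $(X_1, \ldots, X_n)$ is a sample from $\Pbul$ by exploiting a key independence. The labelling $\CC_1, \CC_2, \ldots$, and hence $\Pbul$, is measurable with respect to $\Pdecbul$, the paintbox, and $(U_{n+1}, U_{n+2}, \ldots)$; in particular it is independent of $(U_1, \ldots, U_n)$ given the paintbox. Hence, conditionally on $\Pbul$, the variables $U_1, \ldots, U_n$ are still i.i.d.\ uniform. Since for $i \le n$ one has $X_i = j$ iff $U_i$ falls in the subinterval labelled $\CC_j$, which has length $P_j$, this gives $\P(X_i = j \mid \Pbul) = P_j$ independently for $i = 1, \ldots, n$, as required. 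The third conclusion is then immediate: $X_i = X_{i'}$ iff $U_i, U_{i'}$ fall in the same subinterval iff $i$ and $i'$ lie in the same block of $\Pi_\infty$, i.e.\ the same block of $\Pi_n$.

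The main obstacle will be careful bookkeeping of conditional distributions: one must keep disjoint the randomness of $U_1,\ldots,U_n$ (which produces the sample) from that of $U_{n+1}, U_{n+2}, \ldots$ (which produces the labelling), and use their conditional independence given the paintbox. Once that separation is made, both the size-biased structure of $\Pbul$ and the i.i.d.\ sampling property of the $X_i$ follow directly from the paintbox representation.
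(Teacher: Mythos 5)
Your plan is correct and is essentially the same argument the paper itself offers for this lemma (which it quotes from elsewhere): realize $\Pi_\infty$ by Kingman's paintbox, let $U_{n+1},U_{n+2},\ldots$ discover and label the intervals so that $\Pbul$ is the standard size-biased permutation of the lengths, and use the conditional independence of $(U_1,\ldots,U_n)$ from the labelling given the paintbox to get the sampling property. The only point to tighten in a full write-up is that the sampling claim should be derived by conditioning on the $\sigma$-field generated by the paintbox and $(U_m)_{m>n}$ (with respect to which both $\Pbul$ and the labelled intervals are measurable) and then projecting down to $\Pbul$, which is exactly the bookkeeping you flag.
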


This key lemma has a ``now you see it, now you don't'' quality, depending on what metaphor is used for the intuitive description of $\Pi_\infty$.
Here is one way to see that the construction works. Regard the exchangeable partition $\Pi_\infty$ as being generated by Kingman's paintbox construction, using random frequencies to create
an interval partition of $[0,1]$ into component intervals whose lengths in size-biased random order have the 
required 
distribution of $\Pbul$.
Let $U$ be a sequence of i.i.d.\ uniform $[0,1]$ random variables independent of the interval partition, and let $\Pi_\infty$ be the
random partition of $\BN$ whose clusters are the equivalence classes for the random equivalence relation $i \sim j$ iff either $i=j$ or $U_i$ and $U_j$ fall in the
same component of the interval partition.
Use the order in which $U_{n+1}, U_{n+2}, \ldots$ discover the component intervals to label them by $1, 2, \ldots$, and define $\Pbul$ by this labeling of interval lengths. 
Finally, let $X_i$ for $i \in [n]$ be the numerical label of the interval component containing $U_i$. Then the conclusions of the lemma should be intuitively clear.

Our proof of Theorem \ref{thm:gibbs} will be expressed in terms of another metaphor for exchangeable random partitions,
the Chinese Restaurant construction of $\Pi_\infty$.
Call the first $n$ customers to enter the restaurant
the {\em primary customers} and customers $n+1, n+2, \ldots$ the {\em secondary} customers. 
Then $\CC_1$, $\CC_2$, $\ldots$ is the list of clusters of $\Pi_\infty$ {\em in order of their discovery by secondary customers},
and $\Pbul$ is the listing of frequencies of these clusters of $\Pi_\infty$ in that order of discovery by secondary customers.
Compared to the usual listing of clusters in order of appearance, this is just a relabeling of tables. Each table in the restaurant is assigned
a new label, with label $1$ for the table at which customer $n+1$ is seated, label $2$ for the next table discovered by one of the secondary customers, and so on.
By some almost surely finite random time, the first $K_n$ tables at which the primary customers were seated will all have been discovered by secondary customers.
At that random time, the values $X_1, \ldots, X_n$ are assigned to the primary customers, with $X_i = j$ if customer $i$ is seated at the $j$th table in
order of discovery by the secondary customers. With this metaphor, the fact that $X_1, \ldots, X_n$ generates $\Pi_n$, the partition of $[n]$
defined by the original seating  plan in the Chinese Restaurant, is completely obvious. That $X_1, \ldots, X_n$ is a sample of size $n$ from $\Pbul$ is less obvious, 
but nonetheless true.

\begin{proof}[Proof of Theorem \ref{thm:gibbs}]
It is enough to show \eqref{sbmagic-repeat} for any particular representation
of a sample $X_1, \ldots X_n$ from $\Pbul$.
For this purpose, we take $X_1, \ldots X_n$ to be constructed as in Lemma \ref{lmm:key},
and use the Chinese Restaurant metaphor.
According to \eqref{app:sb}, in a Gibbs$(\alpha$) $\CRP$
at any stage $m > n$ in the process of rediscovery of the initial tables by secondary customers, 
given that $\Nstar_{\bullet:n} = (n_1, \ldots, n_k)$ say, and
given that up to stage $m$ some non-empty subset of tables $S \subseteq [k]$ remains undiscovered,
and given also that individual $m+1$ sits at one of these tables, 
that table is table $i \in S$ with probability $(n_i - \alpha)/\sum_{s \in S} (n_s -\alpha)$. That is just a $\sizem$-biased assignment of customer $m+1$ 
to one of the remaining tables.  The conditional distribution of $\Nright_{\bullet:n}$  given $\Nstar_{\bullet:n} = (n_1, \ldots, n_k)$ is therefore that of a
$\sizem$-biased random permutation of $(n_1, \ldots, n_k)$.
This proves \eqref{sbmagic-repeat}, and the sampling formula \eqref{rightform} is read from Lemma \ref{lmm:order}.
\end{proof}

The simplicity of these descriptions of value-ordered frequencies in sampling from $\GEM(\alpha,\theta)$, and  Gibbs$(\alpha)$ models in general, suggests
there should be some embellishment of the $\CRP$ generating appearance-ordered frequencies as
described in Section \ref{sec:crp},
in which both the appearance-order and  value-order of the sample are generated sequentially, in an entirely combinatorial way, that is distributionally 
equivalent to the model of sampling from an infinite list of frequencies. 
Such additional structure of sampling in an environment with totally ordered clusters, treated in
\cite{MR1104078} 
\cite{MR1457625} 
\cite{MR2122798} 
and  developed here in Corollary \ref{crl:ocrp},
is well accommodated by an {\em Ordered Chinese Restaurant Process} ($\OCRP$).
This is the usual $\CRP$, 
with a sequentially developing total order of tables, as proposed in \cite{MR2275248}.  
Here the order of tables is taken to be the value-order, although any other order of tables can be treated in a similar fashion.
It is assumed inductively that after $n$ customers have arrived they are seated at some $k$ tables which are 
placed from left to right by order of values in the sample, and a new customer is seated either to some previously occupied table or
to a new table which is placed in one of $k+1$ possible places relative to the old tables.
Technically, the {\em state} of the restaurant after $n$ customers have been seated represents an 
{\em ordered partition of the set of\/ $n$ customers labeled by }$[n]$. 
Customer $n+1$ arrives with a value $X_{n+1}$ and occupies the table where previous customers with the same value were seated, if any, or a new
table if this value appears for the first time, and this new table is placed between tables with lower and higher values than $X_{n+1}$, or at the appropriate 
end of the line of tables if the value $X_{n+1}$ is extreme compared to the values of previous customers. 
Implicit then in the state is the ordering of tables by appearance which can be restored by sorting the tables in order of the least customer
number. If just the value-ordered frequencies of the occupied tables are given instead of the ordered partition of the set $[n]$ then this information is lost. 
But due to exchangeability, the conditional distribution of the appearance order given value-ordered frequencies is a size-biased permutation
of these frequencies.

It turns out that the above procedure specialized to Gibbs$(\alpha)$ partitions with value-ordered frequencies can be described
in a way quite similar to the basic $\CRP$ explained in Section \ref{sec:crp}.  We summarize it in the following Corollary of Theorem~\ref{thm:gibbs}.

\begin{corollary}
\label{crl:ocrp}
In sampling from the limit frequencies of any Gibbs$(\alpha)$ model in size-biased order,
with associated discovery probabilities $p_{k:n}$ as in \eqref{gibbspnk},
the sequential development of value-ordered sampling frequencies is as follows.
Given frequencies $(n_1, \ldots, n_k)$ in value-order in a sample of size $n$, the value-ordered frequencies in the
sample of size $n+1$ are obtained by
\begin{itemize}
\item either, for some $j\in[k+1]$, putting a $1$ into $(n_1, \ldots, n_k)$ at the $j$th of $k+1$ possible places 
(new value not present previously and of rank $j$ in the updated value order),  to create frequencies
$(1,n_1, \ldots, n_k)$,  $(n_1, 1, \ldots, n_k)$, $\ldots$, $(n_1, \ldots, n_k, 1)$ as the case may be, with probabilities
\begin{equation}
\label{freq:sb}
p_{k:n}  \,   h_\alpha(1,n_j, \ldots, n_k) \prod_{i=1}^{j-1} \bigl[ 1 -  h_\alpha(1, n_i, \ldots, n_k) \bigr];
\end{equation}
\item or, for some $j\in[k]$, incrementing $n_{j}$ by $1$ (new value of rank $j$ both in the previous and in the updated value ordering) with probabilities
\begin{equation}
\label{freq1:sb}
(1 - p_{k:n}) h_\alpha(n_j + 1, n_{j+1},\ldots, n_k) \prod_{i=1}^{j-1} \bigl[ 1 -  h_\alpha(n_i +1, n_{i+1},\ldots, n_k) \bigr]
\end{equation}
for $h_\alpha(n_1, \ldots, n_m):= (n_1 - \alpha)/(n_1 - \alpha + \cdots + n_m - \alpha)$ as in \eqref{app:sb1}.
\end{itemize}
\end{corollary}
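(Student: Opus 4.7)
The plan is to read off the $\OCRP$ transitions directly from formula~\eqref{rightform} of Theorem~\ref{thm:gibbs}. First I would observe that, by exchangeability of $(X_1, \ldots, X_n)$, permuting the sample indices permutes the blocks of the value-ordered partition but preserves its distribution, so every ordered partition of $[n]$ with prescribed block sizes $(n_1, \ldots, n_k)$ is equally likely to coincide with $\tilPi_n$. Dividing \eqref{rightform} by the multinomial coefficient $\binom{n}{n_1, \ldots, n_k}$ then gives
\[
\BP[\tilPi_n = (\tilC_1, \ldots, \tilC_k)] = \left(\prod_{i=1}^k \frac{n_i - \alpha}{T_i}\right) V_{k:n} \prod_{i=1}^k w(n_i),
\]
where I write $T_i := (n_i - \alpha) + \cdots + (n_k - \alpha)$. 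The desired transition probabilities are ratios of two such expressions, and since each such ratio depends only on the block sizes and on the position of the change, it descends to a transition rule on $\Nright_{\bullet:n}$.

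For the insertion transition at value-rank $j$, using $w(1) = 1$ and \eqref{gibbspnk} the $V \prod w$ ratio collapses to $V_{k+1:n+1}/V_{k:n} = p_{k:n}$. In the size-biased ratio, the factors at new indices $l > j$ cancel against the old factors at index $l - 1$; each factor at new index $l < j$ contributes $T_l/(T_l + 1 - \alpha)$; and a fresh factor $(1-\alpha)/(T_j + 1 - \alpha)$ appears at position $j$. Identifying these with $1 - h_\alpha(1, n_l, \ldots, n_k)$ and $h_\alpha(1, n_j, \ldots, n_k)$ respectively yields \eqref{freq:sb}.

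For the increment transition on block $j$, the $w$ ratio is $w(n_j + 1)/w(n_j) = n_j - \alpha$ by \eqref{gibbsw}, while combining \eqref{consistency} with \eqref{gibbspnk} gives $V_{k:n+1}/V_{k:n} = (1 - p_{k:n})/(n - k\alpha)$. The size-biased ratio contributes $T_i/(T_i + 1)$ at each $i < j$ and $\frac{n_j + 1 - \alpha}{n_j - \alpha} \cdot \frac{T_j}{T_j + 1}$ at $i = j$. After the $n_j - \alpha$ cancels against the $w$ ratio and one uses $T_1 = n - k\alpha$, the product reduces to
\[
(1 - p_{k:n})(n_j + 1 - \alpha) \cdot \frac{\prod_{i=2}^{j} T_i}{\prod_{i=1}^{j}(T_i + 1)}.
\]

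The main obstacle is matching this with \eqref{freq1:sb}, whose natural factorisation places denominators differently. Using the telescoping identity $T_{i+1} = T_i - (n_i - \alpha)$ one computes $1 - h_\alpha(n_i + 1, n_{i+1}, \ldots, n_k) = T_{i+1}/(T_i + 1)$ and $h_\alpha(n_j + 1, n_{j+1}, \ldots, n_k) = (n_j + 1 - \alpha)/(T_j + 1)$, so both forms collapse to the same product. As a sanity check, the insertion and increment probabilities should sum to $1$ across all $j$, which follows from \eqref{consistency} together with telescoping of the $h_\alpha$ factors parallel to the second form of \eqref{app:sb}.
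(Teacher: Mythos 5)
Your proposal is correct and follows essentially the same route as the paper: it reduces to Bayes ratios of the ordered EPPF $\tilp(n_1,\ldots,n_k)=\psize(n_1,\ldots,n_k)\,p(n_1,\ldots,n_k)$ (your division of \eqref{rightform} by the multinomial coefficient is exactly the content of Lemmas \ref{lmm:oeppf} and \ref{lmm:order}), splits each ratio into the $V,w$ part and the $\psize$ part, and rearranges into the stick-breaking forms \eqref{freq:sb} and \eqref{freq1:sb}. The individual cancellations you record (e.g.\ $V_{k:n+1}/V_{k:n}=(1-p_{k:n})/(n-k\alpha)$, the telescoping $1-h_\alpha(n_i+1,n_{i+1},\ldots,n_k)=T_{i+1}/(T_i+1)$) all check out.
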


This corollary is much simpler than similar descriptions of the development
of value-ordered sampling frequencies for a $\RAM$ with i.i.d.\ factors  provided by 
Gnedin and Pitman \cite{MR2122798} 
and James \cite{MR2275248},  
even in the case of sampling from $\GEM(0,\theta)$, when it can be checked that  Corollary \ref{crl:ocrp} is consistent with results in these sources. 
What is remarkable and unexpected about these results is that it seems extremely difficult to provide any comparably simple descriptions
of the value-ordered frequencies in sampling from a more general  $\RAM$ with independent but not identically distributed factors.
Our arguments make essential use of both the assumed size-biased order of the Gibbs$(\alpha)$ frequencies,
and the sequential description of Gibbs$(\alpha)$ sampling frequencies in appearance order, discussed above.

\begin{proof}[Proof of Corollary \ref{crl:ocrp}]
Suppose that after $n\ge1$ steps of the $\OCRP$ the value-ordered frequencies are $(n_1,\dots,n_k)$, with $n=\sum_{i=1}^k n_i$.
Given that event, according to \eqref{rightform} and Lemmas \ref{lmm:oeppf} and \ref{lmm:order}
the probability that a new customer occupies some new table which is placed in $j$th of $k+1$
possible positions is
\[
\frac{\tilp(n_1,\dots,n_{j-1},1,n_j,\dots,n_k)}{\tilp(n_1,\dots,n_k)}
=\frac{\psize(n_1,\dots,n_{j-1},1,n_j,\dots,n_k)}{\psize(n_1,\dots,n_k)}\,\frac{V_{k+1:n+1}}{V_{k:n}}
\]
for $\psize$ associated with the pseudo-size function $s(n)=n-\alpha$ as in \eqref{psize:def}, where
the second fraction on the right is the ratio of $\EPPF$s \eqref{gibbseppf}, because products of $w$ weights \eqref{gibbsw} cancel. 
By \eqref{gibbspnk} this second fraction is exactly $p_{k:n}$ for the Gibbs$(\alpha)$ partitions. Hence it remains to 
notice that the ratio of $\psize$ functions can be written down in the stick-breaking form \eqref{freq:sb}.

Similarly, given the frequencies $(n_1,\dots,n_k)$ in value-order after $n$ steps, a new customer is seated at the existing table $j$
with probability
\[
\frac{\pcomp(n_1,\dots,n_j+1,\dots,n_k)}{\pcomp(n_1,\dots,n_k)}
=\frac{\psize(n_1,\dots,n_j+1,\dots,n_k)}{\psize(n_1,\dots,n_k)}\,\frac{V_{k:n+1}w(n_j+1)}{V_{k:n}w(n_j)}\,.
\]
From \eqref{gibbsw}, \eqref{consistency}, \eqref{gibbspnk} and \eqref{psize:def} it follows that 
\begin{gather*}
\frac{V_{k:n+1}w(n_j+1)}{V_{k:n}w(n_j)}=\frac{n_j-\alpha}{n-k\alpha}(1-p_{k:n}),\\
\frac{\psize(n_1,\dots,n_j+1,\dots,n_k)}{\psize(n_1,\dots,n_k)}
=\frac{n_j+1-\alpha}{n_j-\alpha}\prod_{i=1}^j\frac{n_i-\alpha+\dots+n_k-\alpha}{n_i-\alpha+\dots+n_k-\alpha+1}
\end{gather*}
and the stick-breaking representation \eqref{freq1:sb} is just a rearrangement of the product of
the right-hand sides above. 
\end{proof}

Two comments on the above argument. 

\noindent
$\bullet$
Once the Bayes ratios have  been calculated as indicated, the conditional independence asserted in Corollary \ref{crl:cndind} is obvious by inspection of the
formulas. But this conditional independence does not seem at all obvious otherwise. Especially because the Bayes calculations show that the value-ordered frequencies do affect the
probabilities of adding to old clusters, it does not seem at all clear why they might not also affect the probability of discovering a new species, in some way more complex than
just through the total number of clusters. Even for sampling from $\GEM(0,\theta)$ this does not seem obvious.

\noindent
$\bullet$
The elementary algebra of cancellation in the Bayes calculations used to prove Corollary \ref{crl:ocrp} can be easily  used to show that the $\OCRP$ defined by that
Corollary \ref{crl:ocrp} gives an ordered exchangeable  partition  of positive integers, without any assumption that it is derived by the value-orders in successive sampling.
In view of the general representation theorem for such
an exchangeable $\OCRP$ due to 
Gnedin \cite{MR1457625} 
it follows that this $\OCRP$ must in fact be derived from value-order generated by some exchangeable sequence $X_1,X_2, \ldots$, which is a sample from some random discrete distribution $F$ on the line,
whose size-biased atoms have the Gibbs$(\alpha)$ distribution determined by the discovery probabilities, because the distribution of partitions of $n$ is built into the dynamics of the restaurant. 
Even for $\GEM(\alpha,\theta)$ it seems far from obvious from this approach why the atoms of $F$ are simply those of Gibbs$(\alpha)$ in their usual order, which is at the heart of what Theorems \ref{thm:sbm} and \ref{thm:gibbs}
are saying.
But this approach might be used in combination with some other means of identifying $F$ to provide an alternate proof of Theorem \ref{thm:sbm}.

\section{Related calculations}
\label{sec:related}

\subsection{Inductive proof of Theorem \ref{thm:sbm}}

For a general RAM, a decomposition over the minimal value $m$ of the sample, which is repeated $n_1$ times if the value-ordered counts are $(n_1, \ldots, n_k)$,
leads to the following recursive formula\footnote{We thank the anonymous referee for this observation.}: for $k\ge2$
\begin{multline}
\label{recursive}
\BP[\Nright_{\bullet:n}=(n_1,\dots,n_k)]\\=\binom{n}{n_1}\sum_{m=1}^\infty\BE\Bigl[H_m^{n_1}(1-H_m)^{n-n_1}\prod_{i=1}^{m-1}(1-H_i)^n\Bigr]
\BP^{(m)}[\Nright_{\bullet:n}=(n_2,\dots,n_k)],
\end{multline}
where $\BP^{(m)}$ refers to the probability in a generally different RAM, that is one generated by $(H_{m+1},H_{m+2},\dots)$ instead of
$(H_1,H_2,\dots)$  in \eqref{stickbreak}.  So only for RAMs with i.i.d.\ factors $H_i$ this is indeed a recursion, but for $\GEM(\alpha,\theta)$
\eqref{recursive} connects the probabilities of value-ordered counts for different parameters: 
\begin{multline}
\label{recursive-gem}
\BP_{\alpha,\theta}[\Nright_{\bullet:n}=(n_1,\dots,n_k)]\\=\binom{n}{n_1}\sum_{m=1}^\infty\BE\Bigl[H_m^{n_1}(1-H_m)^{n-n_1}\prod_{i=1}^{m-1}(1-H_i)^n\Bigr]
\BP_{\alpha,\theta+m\alpha}[\Nright_{\bullet:n}=(n_2,\dots,n_k)].
\end{multline}
This leads to a direct proof of Theorem \ref{thm:sbm} by induction on the number $k$ of distinct values in the sample, which is outlined below.

We need to show that, with $n=n_1+\ldots+n_k$, 
\begin{equation}\label{DTrightarrow}
\BP_{\alpha,\theta}[\Nright_{\bullet:n}=(n_1,\dots,n_k)]=
\frac{n!\bigl(\tfrac\theta\alpha+1\bigr)_{k-1}\alpha^{k-1}}{(\theta+1)_{n-1}}
\prod_{\ell=1}^k\frac{(1-\alpha)_{n_\ell}}{n_\ell!(n_\ell+\dots+n_k-(k-\ell+1)\alpha)}
\end{equation}
which is \eqref{rightform} for the $\GEM(\alpha,\theta)$ $\EPPF$ $p$ given by
\eqref{gibbseppf} with $w$ weights \eqref{gibbsw} and $V$ weights \eqref{althweights}.
Comparing it to the well-known formula  \cite[(3.6)]{MR2245368}.  
\begin{equation}
\BP_{\alpha,\theta}[\Nstar_{\bullet:n}=(n_1,\dots,n_k)]=
\frac{n!(\tfrac{\theta}{\alpha}+1)_{k-1}\alpha^{k-1}}{(\theta+1)_{n-1}}\prod_{\ell=1}^k\frac{(1-\alpha)_{n_\ell-1}}{n_\ell!}
\end{equation}
shows that $\Nright_{\bullet:n}$ is a $\sizem$-biased permutation of $\Nstar_{\bullet:n}$.

In order to evaluate \eqref{recursive-gem} note that
\begin{equation}
\label{Himoments}
\BE_{\alpha,\theta}H_i^r(1-H_i)^s 
=\frac{B(1-\alpha+r,\theta+i\alpha+s)}{B(1-\alpha,\theta+i\alpha)}
=\frac{(1-\alpha)_r(\theta+i\alpha)_s}{(\theta+(i-1)\alpha+1)_{r+s}}.
\end{equation}
For samples with just one value repeated $n$ times it is well known and easy to calculate using \eqref{Himoments} that
\begin{align*}
\BP_{\alpha,\theta}[\Nright_{\bullet:n}=(n)]&{}=\sum_{m=1}^\infty \BE_{\alpha,\theta}P_m^n
=\sum_{m=1}^\infty \BE_{\alpha,\theta}H_m^n \prod_{i=1}^{m-1}\BE_{\alpha,\theta}(1-H_i)^n \\
&{}=\sum_{m=1}^\infty \frac{(1-\alpha)_n}{(\theta+(m-1)\alpha+1)_n}\prod_{i=1}^{m-1}\frac{(\theta+i\alpha)_n}{(\theta+(i-1)\alpha+1)}\\
&{}=\frac{(1-\alpha)_n}{(\theta+1)_n}\sum_{m=1}^\infty\prod_{i=1}^{m-1}\frac{\theta +i\alpha}{\theta+i\alpha+n}.
\end{align*}
The sum in the last line is the evaluation of the hypergeometric function ${}_2F_1(1,\tfrac{\theta+\alpha}{\alpha};\tfrac{\theta+\alpha+n}{\alpha};1)$, and
since for $b>a+1$ one has 
\cite[\S14.11]{MR1424469} 
\begin{equation}\label{hyper}
\sum_{j=0}^{\infty}\frac{(a)_j}{(b)_j}={}_2F_1(1,a;b;1)=\frac{b-1}{b-a-1}
\end{equation}
it finally gives 
\begin{equation}\label{DTn}
\BP_{\alpha,\theta}[\Nright_{\bullet:n}=(n)]=\frac{(1-\alpha)_{n-1}}{(\theta+1)_{n-1}}
\end{equation}
in accordance with \eqref{DTrightarrow}.
This is the induction base.

Now suppose that \eqref{DTrightarrow} is true for some $k$ and check that it is also true with $k+1$ instead of $k$.
Let $n=n_1+\ldots+n_{k+1}$, then by \eqref{recursive-gem}, \eqref{Himoments} and the induction assumption \eqref{DTrightarrow}
\begin{align*}
&\BP_{\alpha,\theta}[\Nright_{\bullet:n}=(n_1,\dots,n_{k+1})]\\
&\qquad=\binom{n}{n_1}\sum_{m=1}^\infty\BE\Bigl[H_m^{n_1}(1-H_m)^{n-n_1}\prod_{i=1}^{m-1}(1-H_i)^n\Bigr]
\BP_{\alpha,\theta+m\alpha}[\Nright_{\bullet:n-n_1}=(n_2,\dots,n_k)]\\
&\qquad=\frac{n!}{n_1!}\sum_{m=1}^\infty \frac{(1-\alpha)_{n_1}(\theta+m\alpha)_{n-n_1}}{(\theta+(m-1)\alpha+1)_n}\prod_{i=1}^{m-1}\frac{(\theta+i\alpha)_n}{(\theta+(i-1)\alpha+1)_n}\times\\
&\qquad\qquad\qquad\times\frac{\bigl(\tfrac{\theta+m\alpha}\alpha+1\bigr)_{k-1}\alpha^{k-1}}{(\theta+m\alpha+1)_{n-n_1-1}}
\prod_{\ell=2}^{k+1}\frac{(1-\alpha)_{n_\ell}}{n_\ell!(n_\ell+\dots+n_{k+1}-(k-\ell+2)\alpha)}\\
&\qquad=\frac{n!(n-(k+1)\alpha)\alpha^{k}}{(\theta+1)_n}\prod_{\ell=1}^{k+1}\frac{(1-\alpha)_{n_\ell}}{n_\ell!(n_\ell+\dots+n_{k+1}-(k-\ell+2)\alpha)}
\sum_{m=1}^\infty \frac{(\tfrac\theta\alpha+1)_{k+m-1}}{(\tfrac{\theta+n}{\alpha}+1)_{m-1}}.
\end{align*}
Writing $(\tfrac{\theta}{\alpha}+1)_{k+m-1}=(\tfrac{\theta}{\alpha}+1)_k(\tfrac{\theta}{\alpha}+k+1)_{m-1}$ and using \eqref{hyper} allows
to calculate 
\[
\sum_{m=1}^\infty \frac{(\tfrac\theta\alpha+1)_{k+m-1}}{(\tfrac{\theta+n}{\alpha}+1)_{m-1}}
=(\tfrac{\theta}{\alpha}+1)_k\frac{\theta+n}{n-(k+1)\alpha}
\]
which gives \eqref{DTrightarrow} with $k$ replaced by $k+1$, as desired.

\subsection{Some checks on Corollary \ref{crl:ocrp}}

As the result of Corollary \ref{crl:ocrp} is a new and not obvious property of Gibbs$(\alpha)$ partitions, even in the heavily studied case $\alpha = 0$ of
a $\GEM(0,\theta)$ partition, this section offers some checks on the result by different approaches. We are able to do this for $\alpha = 0$,
but providing any significant checks on the result for $0 < \alpha < 1$ remains a challenging problem.

We start with some general identity in distribution which is a consequence of Lemma \ref{lmm:key}. Let $(X_1,X_2,\dots)$ be a sample
from random discrete distribution $\Pbul$, and consider
the sequence of indicators $\Delta_n$ and $L_n$ where $\Delta_n:= K_n - K_{n-1}$  is the indicator 
of discovery of a new value at step $n$, 
and $L_n$ is the indicator of placement at the extreme left, i.e.\ that the new value is less than
all previous values. Obviously $0 \le L_n \le \Delta_n$. 

Let the new values be discovered in the random times 
\begin{equation}
\{ 1 = M_1 < M_2 < \cdots \} := \{n \ge 1 : \Delta_n  = 1 \}.
\end{equation}
Let $X$ be the index in this sequence of the time value $1$ first appears in the sample, that is
\begin{equation}
M_X = \min\{m:X_m=1\}. 
\end{equation}

\begin{corollary}\label{cor:X=X_1} 
Suppose that frequencies $\Pbul$ are in size-biased random order. Then $X$ has the same distribution as the first sample $X_1$.
\end{corollary}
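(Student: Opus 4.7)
The plan is to deduce the identity from Lemma \ref{lmm:key} through a block-swap exchangeability argument, passing from finite samples of size $n$ to the infinite sample by letting $n\to\infty$.

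Fix $n\ge 1$ and choose an exchangeable random partition $\Pi_\infty$ of $\BN$ whose size-biased frequencies have the law of $\Pbul$. By Lemma \ref{lmm:key}, the sample of size $n$ from $\Pbul$ may be realised by labelling the blocks $\CC_1,\CC_2,\ldots$ of $\Pi_\infty$ in order of their discovery in the scan $n+1,n+2,\ldots$ and putting $X_i:=j$ iff $i\in\CC_j$. In this realisation $X_1$ is the from-$(n{+}1)$ discovery rank of the block of $\Pi_\infty$ containing the integer $1$, and the lemma guarantees that its distribution is that of the first sample from $\Pbul$. Since by construction the value $1$ in the sample corresponds to $\CC_1$, the block of $\Pi_\infty$ containing $n+1$, the discovery rank $X^{(n)}$ of value $1$ within $(X_1,\ldots,X_n)$ coincides, on the event $A_n$ that this block meets $[1,n]$, with the rank at which the block of $n+1$ appears in the left-to-right scan of $1,2,\ldots,n$.

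Now let $\rho$ be the block-swap of $\BN$ exchanging $[1,n]$ with $[n+1,2n]$ and fixing all other integers; by exchangeability, $\rho(\Pi_\infty)\ed\Pi_\infty$. In $\rho(\Pi_\infty)$ the block containing $1$ is the block of $n+1$ in $\Pi_\infty$, and the discovery scan from $n+1$ in $\rho(\Pi_\infty)$ traverses, in $\Pi_\infty$-coordinates, the integers $1,2,\ldots,n,2n+1,2n+2,\ldots$. On $A_n$ the block in question is located before the scan leaves $[1,n]$, so the from-$(n{+}1)$ discovery rank of the block of $1$ in $\rho(\Pi_\infty)$ equals $X^{(n)}$ computed from $\Pi_\infty$. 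Since this rank in $\rho(\Pi_\infty)$ has, by Lemma \ref{lmm:key}, the same law as $X_1$, a short comparison yields $|\P(X^{(n)}=k)-\P(X_1=k)|\le \P(A_n^c)$ for every $k\in\BN$.

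Finally I let $n\to\infty$. The probability $\P(A_n)$ tends to $1$, because the block of $n+1$ is almost surely infinite and, by exchangeability, almost surely meets $[1,n]$ for large $n$; also $X^{(n)}$ stabilises almost surely at $X$ as soon as $n$ exceeds the (almost surely finite) first appearance time of value $1$, so $\P(X^{(n)}=k)\to\P(X=k)$. Passing to the limit gives $\P(X=k)=\P(X_1=k)$ for every $k\in\BN$, i.e.\ $X\ed X_1$. The main technical point is verifying that on $A_n$ the two discovery ranks match under the swap, which hinges on the fact that on $A_n$ the relevant block is already found before $\rho^{-1}$ injects the tail $2n+1,2n+2,\ldots$ into the scan.
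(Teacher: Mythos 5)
Your argument is correct, and it reaches the conclusion by a genuinely different route from the paper. The paper's proof is a one-shot almost-sure coupling: it applies Lemma \ref{lmm:key} with $n=1$ in the paintbox picture, so that $X_1$ is the discovery rank of the interval $\mathcal{I}$ hit by $U_1$ among the intervals discovered by $U_2,U_3,\dots$, and then realises $X$ from a sample from $\Pbul^*\ed\Pbul$ whose first atom is declared to be $\mathcal{I}$; with that choice $X=X_1$ almost surely and no limit is needed. You instead keep $n$ general, use the block swap $[1,n]\leftrightarrow[n+1,2n]$ to exchange the roles of the primary segment and the start of the secondary scan, obtain the total-variation-type bound $|\P(X^{(n)}=k)-\P(X_1=k)|\le\P(A_n^c)$, and let $n\to\infty$. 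Both proofs hinge on the same two ingredients (Lemma \ref{lmm:key} plus the $\ISBP$ hypothesis, which is what identifies the discovery-ordered frequencies with $\Pbul$ itself), so the underlying symmetry is the same; what your version buys is that it never has to construct the infinite sample from $\Pbul^*$ explicitly, at the cost of an approximation step. Two points you should tighten. First, the realisations produced by Lemma \ref{lmm:key} for different $n$ are \emph{not} nested, so ``$X^{(n)}$ stabilises at $X$'' is not literally true pathwise across your constructions; the correct statement is that $X^{(n)}$ is a functional of $(X_1,\dots,X_n)$ alone, whose law agrees with that of the restriction of a single infinite i.i.d.\ sample from $\Pbul$, and along that nested realisation $X^{(n)}=X$ for all $n\ge M_X<\infty$ a.s., whence $\P(X^{(n)}=k)\to\P(X=k)$. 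Second, for $\P(A_n^c)\to0$ the clean argument is that by exchangeability $\P(A_n^c)=\P(1\not\sim j\ \text{for all}\ 2\le j\le n+1)$, which decreases to the probability that $\{1\}$ is a singleton block, equal to $0$ under the proper-frequencies assumption.
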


\begin{proof} We can think of sampling from any realization of frequencies $\Pbul$. Consider Kingman's paintbox construction of
Section \ref{sec:partitionsfromsamples} and suppose that $U_1$ hits some interval $\mathcal{I}$.
Take $\Pbul$ as in Lemma~\ref{lmm:key} with $n=1$, then they are in size-biased random order, and $X_1$ as defined in the Lemma is
the number of clusters in $\Pi_\infty$ restricted to $\{2,\dots,L\}$, where $L$ is the random time when the sequence $U_2,U_3,\dots$ hits
$\mathcal{I}$. 
On the other hand, suppose that $M_1,M_2,\dots$ and $X$ are produced from the sample from $\Pbul^*\ed \Pbul$, where $P_1^*$ is the
size-biased pick defined as the length of $\mathcal{I}$. Then the order of other frequencies of $\Pbul^*$ is irrelevant to
the definition of $X$, and $X=X_1$ almost surely,
hence the result.
\end{proof}

For the Gibbs$(\alpha)$ partition Corollary \ref{crl:ocrp} gives 
us a very different way to compute the law of $X$:

\begin{corollary}
For the Gibbs$(\alpha)$ partition generated by a sample $X_1, X_2, \ldots$ from Gibbs$(\alpha)$ frequencies $\Pbul$ in size-biased order, let
\begin{equation}
\label{palpha}
p_\alpha(n,k):= \frac{1-\alpha}{ n - k\alpha} = \BP( L_n = 1\giv M_k = n ) = \BP( X_n = \min_{1 \le i \le n } X_i \giv M_k = n)
\end{equation}
which is the common conditional probability in every such Gibbs$(\alpha)$ model that a new minimal value is discovered at time $n$, given that the $k$th new value is discovered at time $n$.
Then for each $k = 1,2, \ldots$
\begin{equation}
\label{nkident}
\BE( P_k ) = \BP(X = k) =  \BE \left[ p_\alpha(M_k,k) \prod_{j= k+1}^\infty \bigl( 1 - p_\alpha(M_j,j)\bigr) \right] .
\end{equation} 
\end{corollary}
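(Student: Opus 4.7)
The first equality $\BE(P_k) = \BP(X = k)$ is immediate from Corollary~\ref{cor:X=X_1}: since $X_1 \sim \Pbul$ conditionally on $\Pbul$, one has $\BP(X_1 = k) = \BE(P_k)$, and Corollary~\ref{cor:X=X_1} identifies this with $\BP(X=k)$. The formula $p_\alpha(n,k) = \tfrac{1-\alpha}{n-k\alpha}$ is read off from the $\OCRP$ of Corollary~\ref{crl:ocrp}: given $M_k = n$ with value-ordered sample frequencies $(m_1,\dots,m_{k-1})$ at time $n-1$, the probability of placing the newly arriving value at the leftmost position is, by \eqref{freq:sb} with $j=1$,
\[
h_\alpha(1,m_1,\dots,m_{k-1}) \;=\; \frac{1-\alpha}{(1-\alpha) + \sum_{i=1}^{k-1}(m_i - \alpha)} \;=\; \frac{1-\alpha}{n-k\alpha},
\]
which depends only on $(n,k)$. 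The event $\{L_n=1\}$ coincides with $\{X_n = \min_{i\le n} X_i\}$ on $\{M_k = n\}$ since $\Delta_n = 1$ forces $X_n$ to differ from all earlier $X_i$.

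The second equality rests on a clean reformulation of the event of interest. Because $\Pbul$ is proper, value $1$ appears in $(X_1,X_2,\dots)$ almost surely and is the global sample minimum. Consequently $X = k$ holds if and only if the $k$-th distinct value discovered is a new sample minimum and no later distinct value is, namely
\[
\{X=k\} \;=\; \{L_{M_k}=1\} \cap \bigcap_{j>k}\{L_{M_j}=0\}\qquad\text{a.s.}
\]

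The main structural input is that the $\OCRP$ decouples the evolution of $K_n$ from the value-ordering: by Corollary~\ref{crl:cndind} the discovery probability $p_{k:n}$ depends only on $(n, K_n)$, and by the computation above, given a new discovery at step $n$ with $K_{n-1} = k-1$, the conditional new-minimum probability $p_\alpha(n,k)$ likewise depends only on $(n,k)$. Hence the discovery-time sequence $(M_j)_{j\ge 1}$ is a functional of $(K_n)$ alone, and a sequential application of the tower property yields the key claim: conditionally on $(M_j)_{j\ge 1}$, the indicators $\{L_{M_j}\}_{j\ge 1}$ are independent Bernoullis with $\BP(L_{M_j}=1\giv(M_i))=p_\alpha(M_j,j)$. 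Substituting the event identification above and taking expectation gives \eqref{nkident}.

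The delicate step is the conditional independence: one must verify that additionally conditioning on the future $(M_i)_{i>j}$ does not disturb the Bernoulli law of $L_{M_j}$. This reduces to showing that future discovery times are determined by $(K_n)_{n\ge M_j}$, whose dynamics given $K_{M_j}$ are blind to value-ordering, so the $L$'s contribute no extra information to $(M_i)_{i>j}$ beyond $K_{M_j}$. A dominated-convergence argument then legitimizes the infinite product, which is well-defined since $p_\alpha(M_j,j) = (1-\alpha)/(M_j - j\alpha)$ is a.s.\ summable along the growth $M_j \to \infty$ dictated by the Gibbs$(\alpha)$ $\alpha$-diversity.
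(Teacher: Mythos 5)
Your proposal is correct and follows essentially the same route as the paper: the first equality via Corollary \ref{cor:X=X_1}, the value of $p_\alpha(n,k)$ and the conditional independence of the new-minimum indicators given $(M_j)$ read off from the $\OCRP$ of Corollary \ref{crl:ocrp}, and the identification $\{X=k\}=\{L_{M_k}=1\}\cap\bigcap_{j>k}\{L_{M_j}=0\}$. Your added justification of the conditional independence (that the discovery times are a functional of $(K_n)$ alone, whose dynamics are blind to the value-ordering) merely makes explicit what the paper asserts by inspection of the $\OCRP$.
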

\begin{proof}
The value of the common conditional probability declared above is read from Corollary \ref{crl:ocrp}, with $n$ instead of $n+1$
and $k-1$ instead of $k$, as is the fact that  given the entire sequence $M_1, M_2, \ldots$
the events of new minima at these times are conditionally independent with  probabilities $p_\alpha(M_j,j)$ for $j \ge 1$. The event $(X=k)$ is the event that the $k$th new value to occur in the sample is minimal value of the whole 
sample, so all new values discovered later are not minimal. Hence the second equality in \eqref{nkident} by conditioning on this sequence. The first equality in \eqref{nkident} 
is Corollary \ref{cor:X=X_1}.
\end{proof}

It is hard to imagine how this formula \eqref{nkident} could be checked in any other way for a general Gibbs$(\alpha)$ partition, though
Griffiths and Span\`o \cite{MR2336601} offer a deep study of the sequence $(M_1, M_2, \ldots, )$ derived from a Gibbs$(\alpha)$ partition which might provide an alternate approach.
For $\GEM(\alpha,\theta)$ there is at least a simple product formula for $\BE_{\alpha,\theta} P_k$. 
But the expected product seems very difficult to check, because there is no independence to work with.
For $\GEM(0,\theta)$ the product is quite manageable however.
Then it is well known \cite{MR2032426} 
that  the indicators $\Delta_n$ are independent, with $\BP_{0,\theta}(\Delta_n=1) = \theta/( \theta +n - 1)$ for $n \ge 1$.
It follows easily that for $k \ge 1$ the $\BP_{0,\theta}$ distribution of $M_k$ is given by the formula
\begin{equation}
\label{probsN}
\BP_{0,\theta} (M_k = n) = C_{n-1,k-1} \theta^k/ (\theta)_n, \qquad n\ge 1,
\end{equation}
where $C_{n,k} = (-1)^{n+k} S_{n,k}$ is the unsigned Stirling number of the first kind giving the number of permutations of $[n]$ with $k$ cycles.
We observe that the evaluation, with $(x)_r:= \Gamma(x + r)/\Gamma(x)$,
\begin{equation}
\label{momenttheta}
\BE_{0,\theta} \left[ \frac{ 1 }{ (M_k + \theta )_r  } \right] =  \frac{ \theta^{k-1}} { (\theta + r )^k (\theta + 1)_{r-1} }, 
\qquad  r > -\theta,
\end{equation}
is an elementary consequence of the fact that these probabilities in \eqref{probsN} sum to $1$ for each $\theta >0$.
This neat formula for inverse Pochhammer moments 
of $M_k$ does not seem to be well known. We only noticed it after needing the case $r=1$ to complete the check indicated below.

For $\alpha=0$ the probability $p_0(n,k)=1/n$ does not depend on $k$, and the identity \eqref{nkident} reduces easily to
\begin{equation}
\label{nkident1}
\frac{ \theta ^{k-1}}{(1 + \theta)^k } = \BE _{0,\theta} \left[ \frac{1}{M_k} \prod_{m= M_{k} + 1 }^\infty \left( 1 - \frac{\Delta_m }{m}  \right) \right]
\end{equation} 
Using independence of the $\Delta_n$ we can compute
\begin{align}
\notag
\BE _{0,\theta} \left[ \prod_{m= M_{k} + 1 }^\infty \left( 1 - \frac{\Delta_m }{m}  \right) \giv M_k = n  \right] &=  \BE _{0,\theta} \left[ \prod_{m= n+ 1 }^\infty \left( 1 - \frac{\Delta_m }{m}   \right) \right] \\
&=  \prod_{m= n+ 1 }^\infty \left( 1 - \frac{\theta }{(\theta + m - 1) m}   \right)  = \frac{n}{n + \theta}
\end{align} 
by the factorization 
$$
 1 - \frac{\theta }{(\theta + m - 1) m}   = \frac{ (m-1)}{m} \frac{ (m+ \theta)}{ m-1 + \theta}
$$
which telescopes the product.
Plugging this into \eqref{nkident1} reduces it to \eqref{momenttheta} for $r=1$.

In the same vein, conditioning on $P_1$ gives access to  $M_X= \min \{n \ge 1 : X_n = 1 \} = \max \{n: L_n = 1 \}$:
$$
\BP _{\alpha,\theta}( M_X > n ) = \BE_{\alpha,\theta} ( 1 - P_1)^n = \frac{ ( \alpha + \theta)_n }{ ( 1  + \theta )_n }
$$
which reduces to $\theta/(\theta + n)$ for $\alpha = 0$. In that case differencing gives
\begin{equation}
\label{MX=n}
\BP_{0,\theta} ( M_X =n  ) = \frac{ \theta } { ( \theta + n ) ( \theta + n -1 )}\,.
\end{equation}
On the other hand,  from the description with $L_n$, the usual $(0,\theta)$ restaurant model, and the fact mentioned above that 
if the $n$th customer goes to a new table, this table is placed to the extreme left with probability $1/n$,
\begin{align}
\notag
\BP_{0,\theta} ( M_X =n  ) &= \BP_{0,\theta}(\Delta_n = 1, L_n = 1, L_m = 0 \mbox{ for all } m > n ) \\
 &=  \frac{ \theta } { (n-1 + \theta ) } \frac{1 }{n} \prod_{ \ell = n+1}^\infty \left(  1 - \frac{\theta}{( \ell - 1 + \theta) } \frac{1 } {\ell } \right) ,
\end{align}
and this is again a telescoping product which reduces to \eqref{MX=n}.
It is not obvious how to perform the same check for general $\alpha$, because the $\Delta_n$ are no longer independent.

\appendix
\section{Appendix: pseudo-size-biased orderings}
\label{sec:pseudo}

We need to extend a well-known notion of a size-biased permutation of a
finite or countably infinite index set $I$, or of a collection of clusters or components of some kind $C_i, i \in I$ that is indexed by $I$,
for some notion of sizes $s(C_i)$ of the clusters involved
\cite{MR1104569} 
\cite{MR1387889}.  
Typically $s(C_i)$ is 
some kind of measure of $C_i$.
But other {\em pseudo-size} functions $s$ may also be considered, subject to the requirements that $s(C_i) >0$ for all $i$ and that $\Sigma:= \sum_{i} s(C_i)  < \infty$, 
which need only be met almost surely.  Given some collection of random components $(C_i, i \in I)$, and a pseudo-size function $s$,  an {\em $s$-biased pick}
from these components is $C_{\sigmapi(1)}$, where $\sigmapi(1) \in I$ is a random index with
\begin{equation}
\BP( \sigmapi(1) = h \giv C_i, i \in I) = s(C_h) /\Sigma \qquad (h \in I ).
\end{equation}
An {\em $s$-biased random permutation} of $(C_i, i \in I)$ is an exhaustive random indexing of components $C_{\sigmapi(j)}$
defined by a sequence of $s$-biased picks without replacement from these components, indexed by $j\in [k]$ 
if there are a finite number $k$ of components, or by $j \in \BN$ if there are an infinite number of them.
So, conditionally given $(C_i, i \in I)$,
\begin{itemize}
\item $C_{\sigmapi(1)}$ is an $s$-biased pick from $(C_i, i \in I)$,
\item given also $\sigmapi(1)$ and there is more than one component, $\sigmapi(2)$ is an $s$-biased pick from $(C_i, i \in I \setminus \{\sigmapi(1) \})$;
\item given also $\sigmapi(2)$ and there are more than two components, $\sigmapi(3)$ is an $s$-biased pick from $(C_i, i \in I \setminus \{ \sigmapi(1), \sigmapi(2) \})$, 
and so on. 
\end{itemize}

\noindent 
By a {\em size-biased permutation} one usually means the $s$-biased permutation as defined above, for the specific choice of the size function $s(C_i)$
equal to the number of elements for a finite set $C_i$, or 
some measure such as length for infinite sets $C_i$ like intervals.

Intuitively, think in terms of a bag of balls $C_i$ with pseudo-sizes $s(C_i)$ reflecting the ease with which they are drawn relative to other balls. Then an
$s$-biased random permutation of the $C_i$ is a listing of the balls in the order they are drawn in an exhaustive process of sampling without replacement.

We need this notion just for components which are blocks of a random set partition.
If the 
pseudo-size function depends just on the size of a component then the pseudo-size-biased permutation of an exchangeable random  
set partition will be an exchangeable ordered set partition.
The following Lemma presents some elementary facts about this construction for a general pseudo-size function depending just
on the real size:

\begin{lemma}
\label{lmm:sbias}
Let $s$ be a strictly positive function of positive integers $m \le n$, for some fixed $n$.
As in \eqref{psize:def} define an associated  function of compositions of $n$ by
\begin{equation}
\label{psize:def-repeat}
\psize( n_1, \ldots, n_k):= \prod_{i=1}^k \frac{ s(n_i) }  { s(n_i) + \cdots + s(n_k) }  .
\end{equation}
Suppose that $(n_1, \ldots, n_k)$ is the list of ordinary sizes of components $(C_1, \ldots, C_k)$ of a fixed ordered partition of\/ $[n]$.
Let $(C_{\sigmapi(1)}, \ldots, C_{\sigmapi(k)})$ be an $s$-biased random permutation of $(C_1, \ldots, C_k)$, for $C_i$ asigned pseudo-size $s(\#C_i) = s(n_i)$.
Then:
\begin{itemize}
\item[\upshape (i)] $(n_{\sigmapi(1)}, \ldots, n_{\sigmapi(k)})$ is an $s$-biased random permutation of  $(n_1, \ldots, n_k)$. 
\item[\upshape (ii)] For\/ $\sigmapi$ the random permutation of\/ $[k]$ so defined, \eqref{psize:def-repeat} gives the probability that\/ $\sigmapi$ is the identity, meaning the components are selected in their original order.
\item[\upshape (iii)]
For each $\sigma$ in the set $\perms{k}$ of all permutations of\/ $[k]$
\begin{equation}
\label{sigm:pi}
\BP( \sigmapi = \sigma ) = \psize( n_{\sigma(1)}, \ldots, n_{\sigma(k)} )   .
\end{equation}
\item[\upshape (iv)] For every composition $(n_1, \ldots, n_k)$ and every pseudo-size function $s$, there is the identity
\begin{equation}
\label{sigm:pisum}
\sum_{\sigma \in \perms{k} } \psize( n_{\sigma(1)}, \ldots, n_{\sigma(k)})    = 1 .
\end{equation}
\end{itemize}
\end{lemma}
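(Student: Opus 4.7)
The plan is to establish the four parts in order, with (i)--(iii) reducing to a direct unpacking of the definition of $s$-biased permutation, and (iv) being a free consequence of (iii). All four parts flow from the sequential description of sampling without replacement by $s$-biased picks.

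For (i), since the pseudo-size assigned to $C_i$ is $s(n_i)$, and this depends only on the size $n_i=\#C_i$, the rule for the first $s$-biased pick can be rephrased purely in terms of the sizes: the first component chosen has size $n_h$ with probability $s(n_h)/\sum_j s(n_j)$. Applying this observation inductively at each stage (after removing the already-selected components) shows that the random sequence of sizes $(n_{\sigmapi(1)},\dots,n_{\sigmapi(k)})$ has exactly the law of an $s$-biased permutation of $(n_1,\dots,n_k)$.

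For (ii), the event that $\sigmapi$ is the identity is the intersection of the events $\{\sigmapi(j)=j\}$ for $j=1,\dots,k$. By the sequential description, given that $\sigmapi(1)=1,\dots,\sigmapi(j-1)=j-1$, the probability that the $j$th pick is $C_j$ equals $s(n_j)/(s(n_j)+\dots+s(n_k))$. Multiplying these conditional probabilities yields exactly $\psize(n_1,\dots,n_k)$ as defined in \eqref{psize:def-repeat}. For (iii), the same argument applies verbatim with the original order $(C_1,\dots,C_k)$ replaced by the reindexed order $(C_{\sigma(1)},\dots,C_{\sigma(k)})$: the event $\{\sigmapi=\sigma\}$ is the event that at each stage $j$ the $s$-biased pick selects $C_{\sigma(j)}$ from the surviving components $\{C_{\sigma(j)},\dots,C_{\sigma(k)}\}$, which has conditional probability $s(n_{\sigma(j)})/(s(n_{\sigma(j)})+\dots+s(n_{\sigma(k)}))$. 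Taking the product gives \eqref{sigm:pi}.

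Part (iv) is then immediate: the events $\{\sigmapi=\sigma\}$ for $\sigma\in\perms{k}$ partition the sample space, so their probabilities, computed in (iii), must sum to $1$, proving \eqref{sigm:pisum} as a purely algebraic identity in the variables $s(n_1),\dots,s(n_k)$. There is no real obstacle here; the only point requiring care is to make the bookkeeping in (iii) unambiguous, namely that after $j-1$ picks the surviving sizes are exactly $\{n_{\sigma(j)},\dots,n_{\sigma(k)}\}$ and the denominator in the $j$th conditional probability is the sum of $s$-values over these surviving sizes, matching the $j$th factor in $\psize(n_{\sigma(1)},\dots,n_{\sigma(k)})$.
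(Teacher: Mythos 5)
Your proposal is correct and follows essentially the same route as the paper's own (very terse) proof: parts (i)--(iii) by unpacking the sequential definition and multiplying successive conditional probabilities, and part (iv) from (iii) via the law of total probability. Your version simply spells out the bookkeeping that the paper leaves implicit.
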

\begin{proof}
Part (i) follows easily from the definition of an $s$-biased permutation, as does \eqref{sigm:pi} and its special case stated in (ii), by multiplication of successive conditional probabilities.
Finally, (iv) follows from (iii) by the law of total probability.
\end{proof}

The operation of $s$-biasing is one easy way to turn an exchangeable  
random partition of $[n]$ into an
ordered exchangeable  random partition of $[n]$. After $s$-biasing we are dealing with
an ordered exchangeable random partition of $[n]$.
The notions of the $\EPPF$ of an exchangeable random partition and the $\OEPPF$ of an ordered exchangeable random partitions were 
introduced in Section \ref{sec:partitionsfromsamples}.
The following lemma records a fundamental relation between  the $\EPPF$  of $\Pi_n$ and the
$\OEPPF$ of its $s$-biased random permutation $\tilPi_n$.

\begin{lemma}
\label{lmm:oeppf}
Let $\tilPi_n$ be the ordered exchangeable random partition of $[n]$ obtained from an exchangeable random partition $\Pi_n$ of $[n]$ by
putting its components in an $s$-biased random order. Then the\/ $\OEPPF$ $\tilp$ of\/ $\tilPi_n$ and the\/ $\EPPF$ $p$ of\/ $\Pi_n$ are related by
\begin{equation}
\label{oeppf}
\tilp (n_1, \ldots, n_k) = \psize(n_1, \ldots, n_k) p(n_1, \ldots, n_k)
\end{equation}
where $\psize$ is defined by \eqref{psize:def-repeat}.
\end{lemma}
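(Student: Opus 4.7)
The proof will be a direct bookkeeping argument that simply reads \eqref{oeppf} off the construction of $\tilPi_n$ together with Lemma~\ref{lmm:sbias}, so I expect no serious obstacle; the main task is to be careful that ``probability of an ordered partition'' is being parsed correctly on both sides.

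The plan is as follows. Fix a specific ordered partition $(D_1,\dots,D_k)$ of $[n]$ with $\#D_i = n_i$, so that by definition of the OEPPF
$$\tilp(n_1,\dots,n_k) = \BP(\tilPi_n = (D_1,\dots,D_k)).$$
The event $\{\tilPi_n = (D_1,\dots,D_k)\}$ is the intersection of two events: first, the underlying exchangeable partition $\Pi_n$ equals the unordered partition $\{D_1,\dots,D_k\}$; second, the subsequent $s$-biased permutation of these $k$ blocks lists them in exactly the order $D_1,D_2,\dots,D_k$. Since the $s$-biased permutation acts only on $\Pi_n$ (and, conditionally on $\Pi_n$, only through the block sizes), these two events factor cleanly.

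The first factor is $\BP(\Pi_n = \{D_1,\dots,D_k\}) = p(n_1,\dots,n_k)$ directly from the definition of the EPPF. For the second factor I apply Lemma~\ref{lmm:sbias}: part~(i) guarantees that after relabeling, the $s$-biased permutation of the blocks $(D_1,\dots,D_k)$ induces an $s$-biased permutation of the size vector $(n_1,\dots,n_k)$, and part~(ii) (equivalently part~(iii) with $\sigma$ the identity) identifies the probability of the identity permutation as $\psize(n_1,\dots,n_k)$, with $\psize$ defined by \eqref{psize:def-repeat}. Multiplying the two factors yields
$$\tilp(n_1,\dots,n_k) = \psize(n_1,\dots,n_k)\,p(n_1,\dots,n_k),$$
which is \eqref{oeppf}.

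The only conceptual check is consistency: summing $\tilp$ over all ordered partitions with given sizes must recover the probability $p(n_1,\dots,n_k)$ weighted by the number of such orderings. Equivalently, for each unordered partition of $[n]$ into blocks of sizes $(n_1,\dots,n_k)$, the orderings obtained by permuting the blocks through $\sigma\in\perms{k}$ have total conditional probability $\sum_{\sigma}\psize(n_{\sigma(1)},\dots,n_{\sigma(k)}) = 1$, which is precisely \eqref{sigm:pisum} of Lemma~\ref{lmm:sbias}(iv). This confirms that \eqref{oeppf} defines a bona fide OEPPF and closes the argument.
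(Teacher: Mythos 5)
Your proposal is correct and matches the paper's proof essentially verbatim: both decompose the event $\{\tilPi_n=(D_1,\dots,D_k)\}$ into the event that $\Pi_n$ equals the unordered partition (probability $p(n_1,\dots,n_k)$ by the definition of the $\EPPF$) intersected with the event that the $s$-biased permutation produces the identity ordering (probability $\psize(n_1,\dots,n_k)$ via Lemma~\ref{lmm:sbias}). The closing consistency check via \eqref{sigm:pisum} is a harmless bonus not present in the paper's argument.
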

\begin{proof}
For any {\em particular} ordered  partition $(C_1, \ldots, C_k)$ of $[n]$ with components of sizes $(n_1, \ldots, n_k)$, 
the $s$-biased permutation of components of $\Pi_n$ equals $(C_1, \ldots, C_k)$ iff $\Pi_n$ equals $\{C_1, \ldots, C_k\}$,
which happens with probability $p(n_1, \ldots, n_k)$, and given that event the $s$-biased permutation puts these components in exactly the
desired order, which according to \eqref{sigm:pi} happens with probability $\psize(n_1, \ldots, n_k)$.
\end{proof}

\begin{proof}[Proof of Lemma \ref{lmm:order}]
It does not change the distribution  of $\Ns_{\bullet:n}$ to assume that the $s$-biased random ordering is made at the level of clusters say $\{C_1, \ldots, C_k\}$ of $\Pi_n$. 
Formula \eqref{s-bias} can then be understood as follows.
According to the previous lemma,  the right-hand side without the multinomial coefficient gives the probability that the $s$-biased permutation of clusters of $\Pi_n$ results in any
{\em particular} ordered partition $(C_1, \ldots, C_k)$ with clusters of these sizes. But the number of such ordered partitions of $[n]$ with the given cluster
sizes is the multinomial coefficient, and the cases are equiprobable, so the conclusion follows.

As for the converse, for a general  random composition $\Ns_{\bullet:n}$  with probability function $q(n_1, \ldots, n_k)=\BP[\Ns_{\bullet:n}=(n_1,\ldots, n_k)]$,
it is known \cite[(4)]{MR2122798}, 
by arguments much as above, that the $\EPPF$ say $\pcomp(n_1, \ldots, n_k)$ of the exchangeable random partition of $[n]$ with the
same distribution of ranked component sizes as those of $\Ns_{\bullet:n}$ is determined by a summation of $q(n_{\sigma(1)}, \ldots, n_{\sigma(k)})$ over all permutations $\sigma$ of $[k]$,
weighted by the inverse of the multinomial coefficient appearing in \eqref{s-bias}. In the present context, assuming that
$q(n_1, \ldots, n_k)$ is given by the right side of \eqref{s-bias}, the multinomial coefficient cancels its inverse, and the general formula for
$\pcomp$ becomes
\begin{equation}
\label{tilpform}
\pcomp(n_1, \ldots, n_k) =  \sum_{\sigma \in \perms{k}} \psize (n_{\sigma(1)}, \ldots, n_{\sigma(k)}) \, p(n_{\sigma(1)}, \ldots, n_{\sigma(k)})  .
\end{equation}
If $p$ is symmetric, then $p(n_{\sigma(1)}, \ldots, n_{\sigma(k)}) \equiv p(n_1, \ldots, n_k)$ can be factored out of the sum,
and the remaining sum is $1$ by \eqref{sigm:pisum}. So $\pcomp = p$.
\end{proof}

\noindent
Some instances of Lemma  \ref{lmm:order} are as follows:

\begin{itemize}
\item The case $s(m):= m$ is ordinary size-biasing. Then the coefficient of $p(n_1, \ldots, n_k)$ on the right side of \eqref{s-bias} becomes
\begin{equation}
\label{count:dt}
\hspace{-1em}
{n \choose n_1, \ldots , n_k } \left( \prod_{i=1}^k  \frac{n_i } {n_i + \cdots + n_k} \right) = \frac{n!} {(n_1-1)! \cdots (n_k-1)!} \left( \prod_{i=1}^k  \frac{1} {n_i + \cdots + n_k} \right) 
\end{equation}
This instance of formula \eqref{s-bias} was given in \cite{MR1337249} and 
\cite[(2.6)]{MR2245368}  
for the ordinary size-biasing involved when $\Ns_{\bullet:n} := \Nstar_{\bullet:n}$ is the sequence of cluster sizes of $\Pi_n$ in order of appearance.
In this case, the coefficient of $p(n_1, \ldots, n_k)$ displayed in \eqref{count:dt} is a positive integer, the number of partitions of $[n]$ with
$k$  clusters of sizes $n_1, \ldots, n_k$ in order of appearance, as indicated by Donnelly and Tavar\'e in connection with their case $\alpha = 0$ of Theorem \ref{thm:sbm}.

\item If $s(m) \equiv 1$ then $\psize(n_1, \ldots, n_k) = 1/k!$. Then \eqref{s-bias} gives the probability function of the component sizes of $\Pi_n$ 
presented in a random order which given $K_n = k$ is uniform on all permutations of $[k]$. This formula appears in 
\cite[(2.7)]{MR2245368}.  
It is of particular interest for sampling from $\GEM(\alpha,\alpha)$, 
when it gives the distribution of the composition of $n$ derived by uniform sampling from the interval partition generated by excursions away from $0$ of a 
standard Brownian bridge for $\alpha = 1/2$, and by a standard Bessel bridge of dimension $2-2\alpha$ for $0 < \alpha <1$. See 
\cite[\S 4.5]{MR2245368}.  

\item The pseudo-size function $s(m):= m - \alpha$ is involved in Theorems \ref{thm:sbm} and \ref{thm:gibbs}.
\end{itemize}

\section{The regenerative ordering of a $\GEM(\alpha,\theta)$ sample}
\label{sec:regenerative}
This appendix compares and contrasts 
\begin{itemize}
\item the value-ordered cluster sizes $\Nright_{\bullet:n}$ in a sample $X_1, \ldots, X_n$ from a $\GEM(\alpha,\theta)$ distribution on $\{1,2, \ldots \}$,
\end{itemize}
which is the primary concern of this article, and
\begin{itemize}
\item the value-ordered cluster sizes $\NYup_{\bullet:n}$ in a sample $Y_1, \ldots, Y_n$ from a particular random discrete distribution $F_{\alpha,\theta}$ on $(0,\infty)$ constructed in 
\cite{MR2122798}, 
with a regenerative property,  whose atoms in size-biased order have $\GEM(\alpha,\theta)$ distribution.
\end{itemize}
See also \cite{MR2684164} for a nice review of these and related concepts.
Recall from 
\cite{MR2122798} 
that a sequence of random compositions 
$(N_{\bullet:n}, n = 1,2, \ldots)$  is called {\em regenerative} if deletion of the first component of $N_{\bullet:n}$ of some given size $n_1 < n$ produces a copy of $N_{\bullet:n-n_1}$:
\begin{equation}
(N_{2:n},  N_{3:n}, \ldots \giv N_{1:n} = n_1 ) \ed (N_{1:n-n_1},  N_{2:n-n_1}, \ldots ) \mbox{ for each } 1 \le n_1  < n .
\end{equation}
It was shown in  
\cite{MR2120107} 
that if in sampling from a random discrete distribution on the line, the value-ordered sample frequencies from various sample sizes $n$ are regenerative in this sense,
then the distribution of these value-ordered sample frequencies is uniquely determined by that of the size-ordered frequencies (Kingman's partition structure), or equally by that of the appearance-ordered frequencies, 
which are in distribution just a size-biased rearrangement of the size-ordered sample frequencies. So a random discrete distribution $\Pbul$, or its associated partition structure, is called {\em regenerative} iff there exists 
such a regenerative rearrangement of its frequencies.
The study of such regenerative composition structures was motivated by the appearance of these structures in the interval partitions generated by the excursions of a Brownian motion or
other Markov process whose zero set is the range of a stable subordinator of index $\alpha \in (0,1)$.  

For any random interval partition of $[0,1]$, defined
by some sequence of interval components $(I_j)$, say $I_j = (G_j,D_j)$  with lengths $P_j:= D_j - G_j$, there is a canonical construction of a random discrete distribution $F$ on $[0,1]$ which 
puts mass $P_j$ at the right end of $I_j$.  The sample $Y_1, Y_2, \ldots $ from $F$ is then constructed from an i.i.d.\ uniform $[0,1]$ random sample $U_1, U_2, \ldots$ by setting $Y_i = D_j$ if $U_i \in (G_j,D_j)$.
The value-ordered clusters in the sample $Y_1, \ldots, Y_n$ then reflect the order structure  of the intervals $(I_j)$ to the extent it is revealed by the intervals discovered by the uniform sampling points $U_i$.

First we emphasize the similarity between these two models of value-ordered cluster sizes $\Nright_{\bullet:n}$ and
$\NYup_{\bullet:n}$ considered above.
The cluster sizes in order of appearance in the two sampling schemes  are identically distributed, as $\GEM(\alpha,\theta)$.
If $\alpha = 0$, the $F_{0,\theta}$ mentioned above simply puts probability $P_j$ at $1-\prod_{i=1} ^j (1 - H_i)$ where the $H_i$ are the i.i.d.\ beta$(1,\theta)$
factors driving the stick-breaking construction \eqref{stickbreak} of the $\GEM(0,\theta)$ frequencies. The order structure of these possible values is identical to that of their positive  integer labels $j = 1, 2, \ldots$.
So the value-ordered cluster sizes $\Nright_{\bullet:n}$ and $\NYup_{\bullet:n}$ are identically distributed. 

And now the big difference. For $0 < \alpha <1$, the random discrete distribution $F_{\alpha,\theta}$ involved in the regenerative ordering of $\GEM(\alpha,\theta)$ frequencies cannot have its atoms 
listed in increasing order like this. Consequently, the value-ordered sample frequencies $\Nright_{\bullet:n}$ and $\NYup_{\bullet:n}$ cannot be identically distributed for all $n$.  
There is some flexibility in the definition of $F_{\alpha, \theta}$, corresponding to change of variables from $Y_n$ to $g(Y_n)$ by a continuous and strictly increasing function $g$.
But that has no effect on the distribution of value-ordered sample frequencies $\NYup_{\bullet:n}$.
According to the results of \cite{MR2122798} 
for $0 < \alpha < 1$, in any representation of the regenerative composition structure associated with $\GEM(\alpha,\theta)$ by value-ordered samples from  a random discrete distribution $F_{\alpha,\theta}$ on the line,
the atoms of $F_{\alpha,\theta}$ must with probability one accumulate at the left end of the support of $F_{\alpha,\theta}$, corresponding the fact that as the compositions of $n$ grow, with probability one new singleton clusters 
are added infinitely often at the extreme left end of the sample values. For large $n$ the initial components of $\NYup_{\bullet:n}$ are all small, with convergence in distribution to $(1,1, \ldots )$
as $n \to \infty$, which is not very interesting. On the other hand, the initial component $\Nright_{1:n}$ has limiting relative frequency $n^{-1} \Nright_{1:n} \to P_1 >0$ almost surely.

While the limiting behavior of these differently ordered sampling compositions derived from $\GEM(\alpha,\theta)$ is very different for $0 < \alpha < 1$, the stochastic mechanism by which they
can be described turns out to be very similar. This involves just a slight extension of the notion of pseudo-size-biased random ordering as in Lemma \ref{lmm:order}.

The definition of an $s$-biased  random permutation proposed in
Section~\ref{sec:pseudointro} and treated further in Appendix \ref{sec:pseudo} 
admits an obvious generalization in which a strictly positive function $s(m)$ of a single integer variable $m$ with $1 \le m \le n$
is replaced by  strictly positive function $s(n',n'')$ of positive integer variables $1 \le n'' \le  n' \le n$  which
for each fixed $n'$ gives the pseudo-size to be assigned to each component of size $n'' \le n'$ in making a pseudo-size-biased pick from clusters of sizes $n''_1, \ldots,  n_j''$ with $\sum_{i=1}^j n_i'' = n'$.
Then we can formulate the following straightforward extension of Lemmas \ref{lmm:sbias} and  \ref{lmm:order}.

\begin{lemma}
\label{lmm:order:gen}
Let $s = s(n',n'')$ be some arbitrary strictly positive pseudo-size function of positive integers $1 \le n'' \le n' \le n$ for some fixed positive integer $n$.
Extend the definition \eqref{psize:def} in Lemma \ref{lmm:sbias} to
\begin{equation}
\label{psize2:def}
\psize( n_1, \ldots, n_k):= \prod_{i=1}^k \frac{ s(\Nsum_i,n_i) }  { s(\Nsum_i,n_i) + \cdots + s(\Nsum_i,n_k) }   \mbox{ where } \Nsum_i:= n_i + \cdots + n_k.
\end{equation}
Then all four parts of Lemma \ref{lmm:sbias} remain valid, as does  the sampling formula of Lemma \eqref{lmm:order} for the
probability function of an $s$-biased random ordering of the cluster sizes of an exchangeable random partition $\Pi_n$ with $\EPPF$ $p$.
\end{lemma}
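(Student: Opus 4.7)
The plan is to track the proofs of Lemmas \ref{lmm:sbias} and \ref{lmm:order} line by line, with only bookkeeping changes. The central observation is that when sequentially making $s$-biased picks from a collection of clusters, the total remaining mass just before the $i$-th pick is deterministic given the identities of the first $i-1$ picks, so the extra dependence of $s(n',n'')$ on $n'$ introduces no new randomness: the sequential conditional probabilities still multiply out cleanly into a product of ratios, just as in the single-variable case.

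First I would prove the analog of \eqref{sigm:pi}. Fix an ordered partition $(C_1,\dots,C_k)$ of $[n]$ with sizes $(n_1,\dots,n_k)$ and a permutation $\sigma\in\perms{k}$. Writing down the conditional probability that $\sigmapi=\sigma$ as a product of the successive $s$-biased picking probabilities, one observes that just before the $i$-th pick along the trajectory indexed by $\sigma$ the total remaining mass equals $\Nsum_i^\sigma := n_{\sigma(i)}+\dots+n_{\sigma(k)}$, so the $i$-th factor is
\[
\frac{s(\Nsum_i^\sigma,n_{\sigma(i)})}{\sum_{j=i}^k s(\Nsum_i^\sigma,n_{\sigma(j)})},
\]
and the product over $i$ is exactly $\psize(n_{\sigma(1)},\dots,n_{\sigma(k)})$ under the generalized definition \eqref{psize2:def}. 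This is (iii); part (ii) is its $\sigma=\mathrm{id}$ specialization, part (i) is the observation that the law of $\sigmapi$ depends on the $C_i$ only through their sizes, and part (iv) follows from (iii) by the law of total probability.

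The sampling formula is then recovered exactly as in the original proof of Lemma \ref{lmm:order}: there are $\binom{n}{n_1,\dots,n_k}$ equiprobable ordered set partitions of $[n]$ whose successive block sizes are $(n_1,\dots,n_k)$, and for each the joint probability that $\Pi_n$ equals the underlying set partition and that the $s$-biased permutation places its blocks in the target order is $\psize(n_1,\dots,n_k)\,p(n_1,\dots,n_k)$ by (iii). Summing gives the generalized version of \eqref{s-bias}. The converse implication in Lemma \ref{lmm:order}(ii) is then obtained by the same $\EPPF$-recovery argument from \cite{MR2122798}: after cancelling the multinomial coefficient, the symmetry of $p$ lets it be factored out of the sum over $\sigma \in \perms{k}$, and the remaining sum equals $1$ by the generalized \eqref{sigm:pisum}, forcing $\pcomp=p$.

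There is no essential obstacle; the only substantive check is that \eqref{sigm:pisum} really does survive in the two-variable setting. But this is automatic, since that identity is, by construction, the statement that the probabilities of the $k!$ possible outcomes of an exhaustive random sequential selection scheme sum to $1$, and the scheme is well-defined precisely because $\Nsum_i^\sigma$ is deterministic given $\sigma$. Everything reduces to the same algebra as before, with $s(n_i)$ replaced by $s(\Nsum_i,n_i)$.
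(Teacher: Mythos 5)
Your proof is correct and follows exactly the route the paper intends: the paper states Lemma \ref{lmm:order:gen} as a ``straightforward extension'' of Lemmas \ref{lmm:sbias} and \ref{lmm:order} without writing out details, and your key observation---that the total remaining mass $\Nsum_i^\sigma$ before each pick is deterministic given the identities of the earlier picks, so the sequential conditional probabilities still multiply out to the generalized $\psize$ of \eqref{psize2:def}---is precisely the point that makes the extension go through. The rest (parts (ii)--(iv), the multinomial count, and the converse via factoring the symmetric $p$ out of the sum over $\perms{k}$) is carried over verbatim, as it should be.
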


According to \cite[Theorem 8.1]{MR2122798} 
for $0 \le \alpha < 1$ and $\theta \ge 0$, in sampling from the regenerative arrangement of $\GEM(\alpha, \theta)$ frequencies,
the probability function of the value-ordered frequencies is given by a simple product formula, which in association with the simple product formula
for the $(\alpha,\theta)$ $\EPPF$, which can be read from the formulas 
\eqref{gibbseppf}--\eqref{gibbsw}--\eqref{althweights},
 is an expression of the fact
\cite[Corollary 8.2]{MR2122798} 
that these value-ordered frequencies are in an $s$-biased random order for the pseudo-size function
\begin{equation}
\label{snn}
s_{\alpha,\theta}(n',n'') = \alpha (n' - n'')  + \theta n'' 
\end{equation}
which satisfies the strict positivity requirement only for $0 \le \alpha < 1$ and $\theta \ge 0$.
For $\alpha >0$ this is quite a strange notion pseudo-size:  a linear combination of the usual size $n''$ of a component, 
and the size $n'-n''$ of its complement in a universe of size $n'$. 
For $\alpha = 0$, $s_{0,\theta}(n',n'') = \theta n''$, the constant factor $\theta$ has no effect, the pseudo-size-biasing reduces to
ordinary size-biasing, and we recover the case $\alpha = 0$ of Theorem \ref{thm:sbm}  due to Donnelly and Tavar\'e.
These results of \cite{MR2122798} 
can now be seen in a broader context of descriptions of  random compositions of $n$ derived from 
each other, or from random partitions of $n$, by various schemes of pseudo-size-biased sampling.
This operation of pseudo-size-biased sampling is a particularly tractable case of the more general
notion of a {\em deletion kernel} for recursive sampling of parts of a partition of $n$, as treated further in 
\cite{MR2120107}. 
The present approach of working with ordered partitions of the set $[n]$, as in the proof of Lemma \ref{lmm:order}, and in some passages of
\cite{MR2122798}, 
seems to be technically easier than the formalism of unordered partitions of $n$ adopted in \cite{MR2120107}. 

\section{Acknowledgement}

Thanks to Matthias Winkel for comments on an earlier version of this article and to the anonymous referee for helpful comments.

\bibliographystyle{amsplain}
\bibliography{gemmax,0gibbs}

\providecommand{\bysame}{\leavevmode\hbox to3em{\hrulefill}\thinspace}
\providecommand{\MR}{\relax\ifhmode\unskip\space\fi MR }
\providecommand{\MRhref}[2]{%
  \href{http://www.ams.org/mathscinet-getitem?mr=#1}{#2}
}
\providecommand{\href}[2]{#2}
\begin{thebibliography}{10}

\bibitem{MR2032426}
Richard Arratia, A.~D. Barbour, and Simon Tavar{\'e}, \emph{Logarithmic
  combinatorial structures: a probabilistic approach}, EMS Monographs in
  Mathematics, European Mathematical Society (EMS), Z\"urich, 2003.
  \MR{2032426}

\bibitem{MR3322311}
Sergio Bacallado, Stefano Favaro, and Lorenzo Trippa, \emph{Looking-backward
  probabilities for {G}ibbs-type exchangeable random partitions}, Bernoulli
  \textbf{21} (2015), no.~1, 1--37. \MR{3322311}

\bibitem{MR2479467}
Annalisa Cerquetti, \emph{On a {G}ibbs characterization of normalized
  generalized {G}amma processes}, Statist. Probab. Lett. \textbf{78} (2008),
  no.~18, 3123--3128. \MR{2479467}

\bibitem{cerquetti2009generalized}
\bysame, \emph{A generalized sequential construction of exchangeable {G}ibbs
  partitions with application}, S. Co. 2009. Sixth Conference. Complex Data
  Modeling and Computationally Intensive Statistical Methods for Estimation and
  Prediction, Maggioli Editore, 2009, p.~115.

\bibitem{MR3035269}
\bysame, \emph{Marginals of multivariate {G}ibbs distributions with
  applications in {B}ayesian species sampling}, Electron. J. Stat. \textbf{7}
  (2013), 697--716. \MR{3035269}

\bibitem{MR3051207}
\bysame, \emph{Some contributions to the theory of conditional {G}ibbs
  partitions}, Complex models and computational methods in statistics, Contrib.
  Statist., Physica-Verlag/Springer, Milan, 2013, pp.~77--89. \MR{3051207}

\bibitem{MR3252637}
Oriana Cesari, Stefano Favaro, and Bernardo Nipoti, \emph{Posterior analysis of
  rare variants in {G}ibbs-type species sampling models}, J. Multivariate Anal.
  \textbf{131} (2014), 79--98. \MR{3252637}

\bibitem{costantini2016wright}
Cristina Costantini, Pierpaolo De~Blasi, Stewart~N. Ethier, Matteo Ruggiero,
  and Dario Spano, \emph{Wright--{F}isher construction of the two-parameter
  {P}oisson--{D}irichlet diffusion}, arXiv preprint arXiv:1601.06064 (2016).

\bibitem{MR3458591}
Harry Crane, \emph{Rejoinder: {T}he ubiquitous {E}wens sampling formula [
  {MR}3458586; {MR}3458587; {MR}3458588; {MR}3458589; {MR}3458590;
  {MR}3458585]}, Statist. Sci. \textbf{31} (2016), no.~1, 37--39. \MR{3458591}

\bibitem{MR3458585}
\bysame, \emph{The ubiquitous {E}wens sampling formula}, Statist. Sci.
  \textbf{31} (2016), no.~1, 1--19. \MR{3458585}

\bibitem{de2015gibbs}
Pierpaolo De~Blasi, Stefano Favaro, Antonio Lijoi, Rams{\'e}s~H Mena, Igor
  Pr{\"u}nster, and Matteo Ruggiero, \emph{Are gibbs-type priors the most
  natural generalization of the dirichlet process?}, IEEE transactions on
  pattern analysis and machine intelligence \textbf{37} (2015), no.~2,
  212--229.

\bibitem{MR1104569}
Peter Donnelly, \emph{The heaps process, libraries, and size-biased
  permutations}, J. Appl. Probab. \textbf{28} (1991), no.~2, 321--335.
  \MR{1104569}

\bibitem{MR1104078}
Peter Donnelly and Paul Joyce, \emph{Consistent ordered sampling distributions:
  characterization and convergence}, Adv. in Appl. Probab. \textbf{23} (1991),
  no.~2, 229--258. \MR{1104078}

\bibitem{MR827330}
Peter Donnelly and Simon Tavar{\'e}, \emph{The ages of alleles and a
  coalescent}, Adv. in Appl. Probab. \textbf{18} (1986), no.~1, 1--19.
  \MR{827330}

\bibitem{MR1066961}
S.~N. Ethier, \emph{The distribution of the frequencies of age-ordered alleles
  in a diffusion model}, Adv. in Appl. Probab. \textbf{22} (1990), no.~3,
  519--532. \MR{1066961}

\bibitem{MR3439188}
Stefano Favaro and Lancelot~F. James, \emph{A note on nonparametric inference
  for species variety with {G}ibbs-type priors}, Electron. J. Stat. \textbf{9}
  (2015), no.~2, 2884--2902. \MR{3439188}

\bibitem{MR2663265}
Shui Feng, \emph{The {P}oisson-{D}irichlet distribution and related topics},
  Probability and its Applications (New York), Springer, Heidelberg, 2010,
  Models and asymptotic behaviors. \MR{2663265}

\bibitem{MR2160320}
A.~Gnedin and J.~Pitman, \emph{Exchangeable {G}ibbs partitions and {S}tirling
  triangles}, Zap. Nauchn. Sem. S.-Peterburg. Otdel. Mat. Inst. Steklov. (POMI)
  \textbf{325} (2005), no.~Teor. Predst. Din. Sist. Komb. i Algoritm. Metody.
  12, 83--102, 244--245. \MR{2160320}

\bibitem{MR2744243}
Alexander Gnedin, Chris Haulk, and Jim Pitman, \emph{Characterizations of
  exchangeable partitions and random discrete distributions by deletion
  properties}, Probability and mathematical genetics, London Math. Soc. Lecture
  Note Ser., vol. 378, Cambridge Univ. Press, Cambridge, 2010, pp.~264--298.
  \MR{2744243}

\bibitem{MR2120107}
Alexander Gnedin and Jim Pitman, \emph{Regenerative partition structures},
  Electron. J. Combin. \textbf{11} (2004/06), no.~2, Research Paper 12, 21.
  \MR{2120107}

\bibitem{MR2122798}
\bysame, \emph{Regenerative composition structures}, Ann. Probab. \textbf{33}
  (2005), no.~2, 445--479. \MR{2122798}

\bibitem{MR1457625}
Alexander~V. Gnedin, \emph{The representation of composition structures}, Ann.
  Probab. \textbf{25} (1997), no.~3, 1437--1450. \MR{1457625}

\bibitem{MR2684164}
\bysame, \emph{Regeneration in random combinatorial structures}, Probab. Surv.
  \textbf{7} (2010), 105--156. \MR{2684164}

\bibitem{MR2336601}
Robert~C. Griffiths and Dario Span{\`o}, \emph{Record indices and age-ordered
  frequencies in exchangeable {G}ibbs partitions}, Electron. J. Probab.
  \textbf{12} (2007), 1101--1130. \MR{2336601}

\bibitem{MR0010342}
Paul~R. Halmos, \emph{Random alms}, Ann. Math. Statistics \textbf{15} (1944),
  182--189. \MR{0010342}

\bibitem{james-lau-gibbs}
Man-Wai Ho, Lancelot~F. James, and John~W. Lau, \emph{Gibbs partitions
  {(EPPF’s)} derived from a stable subordinator are {F}ox {$H$}- and {M}eijer
  {$G$}-transforms}, arXiv preprint arXiv:0708.0619v2 (2007).

\bibitem{MR2275248}
Lancelot~F. James, \emph{Poisson calculus for spatial neutral to the right
  processes}, Ann. Statist. \textbf{34} (2006), no.~1, 416--440. \MR{2275248}

\bibitem{MR2160323}
S.~Kerov, \emph{Coherent random allocations, and the {E}wens-{P}itman formula},
  Zap. Nauchn. Sem. S.-Peterburg. Otdel. Mat. Inst. Steklov. (POMI)
  \textbf{325} (2005), no.~Teor. Predst. Din. Sist. Komb. i Algoritm. Metody.
  12, 127--145, 246, PDMI Preprint, Steklov Math. Institute, St. Petersburg,
  1995. \MR{2160323}

\bibitem{MR509954}
John F.~C. Kingman, \emph{The representation of partition structures}, J.
  London Math. Soc. (2) \textbf{18} (1978), no.~2, 374--380. \MR{509954}

\bibitem{MR2434179}
Antonio Lijoi, Igor Pr{\"u}nster, and Stephen~G. Walker, \emph{Bayesian
  nonparametric estimators derived from conditional {G}ibbs structures}, Ann.
  Appl. Probab. \textbf{18} (2008), no.~4, 1519--1547. \MR{2434179}

\bibitem{MR2469329}
\bysame, \emph{Investigating nonparametric priors with {G}ibbs structure},
  Statist. Sinica \textbf{18} (2008), no.~4, 1653--1668. \MR{2469329}

\bibitem{MR1156448}
Mihael Perman, Jim Pitman, and Marc Yor, \emph{Size-biased sampling of
  {P}oisson point processes and excursions}, Probab. Theory Related Fields
  \textbf{92} (1992), no.~1, 21--39. \MR{1156448}

\bibitem{MR2596654}
Leonid~A. Petrov, \emph{A two-parameter family of infinite-dimensional
  diffusions on the {K}ingman simplex}, Funktsional. Anal. i Prilozhen.
  \textbf{43} (2009), no.~4, 45--66. \MR{2596654}

\bibitem{MR1337249}
Jim Pitman, \emph{Exchangeable and partially exchangeable random partitions},
  Probab. Theory Related Fields \textbf{102} (1995), no.~2, 145--158.
  \MR{1337249}

\bibitem{MR1387889}
\bysame, \emph{Random discrete distributions invariant under size-biased
  permutation}, Adv. in Appl. Probab. \textbf{28} (1996), no.~2, 525--539.
  \MR{1387889}

\bibitem{MR2004330}
\bysame, \emph{Poisson-{K}ingman partitions}, Statistics and science: a
  {F}estschrift for {T}erry {S}peed, IMS Lecture Notes Monogr. Ser., vol.~40,
  Inst. Math. Statist., Beachwood, OH, 2003, pp.~1--34. \MR{2004330}

\bibitem{MR2245368}
\bysame, \emph{Combinatorial stochastic processes}, Lecture Notes in
  Mathematics, vol. 1875, Springer-Verlag, Berlin, 2006, Lectures from the 32nd
  Summer School on Probability Theory held in Saint-Flour, July 7--24, 2002,
  With a foreword by Jean Picard. \MR{2245368}

\bibitem{2017gem}
Jim Pitman and Yuri Yakubovich, \emph{Extremes and gaps in sampling from a
  {GEM} random discrete distribution},  (2017), 26 pp., arXiv preprint
  arXiv:1701.06294.

\bibitem{sawyer1985sampling}
Stanley Sawyer and Daniel Hartl, \emph{A sampling theory for local selection},
  Journal of Genetics \textbf{64} (1985), no.~1, 21--29.

\bibitem{MR0475994}
G.~A. Watterson, \emph{Reversibility and the age of an allele. {I}. {M}oran's
  infinitely many neutral alleles model}, Theoret. Popul. Biol. \textbf{10}
  (1976), no.~3, 239--253. \MR{0475994}

\bibitem{MR0475995}
\bysame, \emph{Reversibility and the age of an allele. {II}. {T}wo-allele
  models, with selection and mutation}, Theoret. Popul. Biol. \textbf{12}
  (1977), no.~2, 179--196. \MR{0475995}

\bibitem{watterson1977most}
G.~A. Watterson and H.~A. Guess, \emph{Is the most frequent allele the
  oldest?}, Theoret. Popul. Biol. \textbf{11} (1977), no.~2, 141--160.

\bibitem{MR1424469}
E.~T. Whittaker and G.~N. Watson, \emph{A course of modern analysis}, Cambridge
  Mathematical Library, Cambridge University Press, Cambridge, 1996, An
  introduction to the general theory of infinite processes and of analytic
  functions; with an account of the principal transcendental functions, Reprint
  of the fourth (1927) edition. \MR{1424469}

\end{thebibliography}
\end{document}